\documentclass[12pt]{amsart}

\usepackage[OT2, T1]{fontenc}

\usepackage{amscd}
\usepackage{amsmath}
\usepackage{amssymb}
\usepackage{mathrsfs} 			
\usepackage{units}
\usepackage[all]{xy}

\usepackage{algorithm}
\usepackage{algorithmic}

\def\frk{\frak}               

\def\qq{{\frk q}}

\def\mm{{\frk m}}

\def\Phi{{\frk n}}
\def\Phi{{\frk N}}
\def\opn#1#2{\def#1{\operatorname{#2}}} 
\opn\chara{char} \opn\length{\ell} \opn\pd{pd} \opn\rk{rk}
\opn\projdim{proj\,dim} \opn\injdim{inj\,dim} \opn\rank{rank}
\opn\depth{depth} \opn\sdepth{sdepth} \opn\fdepth{fdepth}
\opn\grade{grade} \opn\height{height} \opn\embdim{emb\,dim}
\opn\codim{codim}  \opn\min{min} \opn\max{max}

\opn\Tr{Tr} \opn\bigrank{big\,rank}
\opn\superheight{superheight}\opn\lcm{lcm}
\opn\trdeg{tr\,deg}
\opn\reg{reg} \opn\lreg{lreg} \opn\ini{in} \opn\lpd{lpd}
\opn\size{size}
\opn\div{div} \opn\Div{Div} \opn\cl{cl} \opn\Cl{Cl}
\opn\Spec{Spec} \opn\Supp{Supp} \opn\supp{supp} \opn\Sing{Sing}
\opn\Ass{Ass} \opn\Min{Min}
\opn\Ann{Ann} \opn\Rad{Rad} \opn\Soc{Soc}
\opn\Im{Im} \opn\Ker{Ker} \opn\Coker{Coker} \opn\Am{Am}
\opn\Hom{Hom} \opn\Tor{Tor} \opn\Ext{Ext} \opn\End{End}
\opn\Aut{Aut} \opn\id{id}  \opn\deg{deg}

\opn\nat{nat}
\opn\pff{pf}
\opn\Pf{Pf} \opn\GL{GL} \opn\SL{SL} \opn\mod{mod} \opn\ord{ord}
\opn\Gin{Gin} \opn\Hilb{Hilb}
\opn\aff{aff} \opn\con{conv} \opn\relint{relint} \opn\st{st}
\opn\lk{lk} \opn\cn{cn} \opn\core{core} \opn\vol{vol}
\opn\link{link} \opn\star{star}
\opn\gr{gr}

\def\pot#1#2{#1[\kern-0.28ex[#2]\kern-0.28ex]}

\opn\dirlim{\underrightarrow{\lim}}
\opn\inivlim{\underleftarrow{\lim}}
\let\iso=\cong

\let\to=\rightarrow

\def\Implies{\ifmmode\Longrightarrow \else
        \unskip${}\Longrightarrow{}$\ignorespaces\fi}
\def\implies{\ifmmode\Rightarrow \else
        \unskip${}\Rightarrow{}$\ignorespaces\fi}
\def\iff{\ifmmode\Longleftrightarrow \else
        \unskip${}\Longleftrightarrow{}$\ignorespaces\fi}

\let\:=\colon
\newtheorem{Theorem}{Theorem}[]
\newtheorem{Lemma}[Theorem]{Lemma}
\newtheorem{Corollary}[Theorem]{Corollary}
\newtheorem{Proposition}[Theorem]{Proposition}

\theoremstyle{definition}

\newtheorem{Remark}[Theorem]{Remark}

\newtheoremstyle{subsection-tweak}
   {11pt}
   {3pt}%
   {}
   {}%
   {\bfseries}
   {}%
   {.5em}
   {\thmnumber{\@{#1}{}\@{#2}.}%
    \thmnote{~{\bfseries#3.}}}    

\newcounter{numberingbase}

\theoremstyle{subsection-tweak}
\newtheorem{bpp}[Theorem]{}
\newtheorem{bppt}[numberingbase]{}
\newcommand{\bbpp}{\begin{bpp}}
\newcommand{\eepp}{\end{bpp}}
\newcommand{\bbppt}{\begin{bppt}}
\newcommand{\eeppt}{\end{bppt}}

\theoremstyle{theorem}

\theoremstyle{definition}

\newcommand{\val}{\mathrm{val}}		

\newcommand{\sU}{\mathscr{U}}

\DeclareMathOperator{\Ir}{Irr}
\DeclareMathOperator{\ch}{char}
\DeclareMathOperator{\card}{card}
\newcommand{\wt}{\widetilde}

\let\epsilon\varepsilon
\let\phi=\varphi
\def\qed{\ifhmode\textqed\fi
      \ifmmode\ifinner\quad\qedsymbol\else\dispqed\fi\fi}
\def\textqed{\unskip\nobreak\penalty50
       \hskip2em\hbox{}\nobreak\hfil\qedsymbol
       \parfillskip=0pt \finalhyphendemerits=0}
\def\dispqed{\rlap{\qquad\qedsymbol}}

\opn\dis{dis}
\def\pnt{{\raise0.5mm\hbox{\large\bf.}}}

\opn\Lex{Lex}

\begin{document}

\title{Extension of valuation rings as limits of complete intersection algebras.}

\author{ Dorin Popescu}

\dedicatory{In the memory of Ionel Bucur (1930-1976) and Nicolae Radu (1931-2001)}

\address{Simion Stoilow Institute of Mathematics of the Romanian Academy,
Research unit 5, P.O. Box 1-764, Bucharest 014700, Romania,}

\address{University of Bucharest, Faculty of Mathematics and Computer Science
Str. Academiei 14, Bucharest 1, RO-010014, Romania,}

\address{ Email: {\sf dorin.m.popescu@gmail.com}}

\begin{abstract} We give sufficient conditions for an extension  of valuation rings  to be a filtered colimit of   complete intersection algebras. 

 {\it Key words}: Valuation  Rings, Immediate Extensions, Complete Intersection Algebras,  Henselian Rings.    

 {\it 2020 Mathematics Subject Classification: Primary 13F30, Secondary 13A18,   13B40, 13B35.}
\end{abstract}

\maketitle

\section*{Introduction}

Following the Zariski Uniformization Theorem \cite{Z} we state in  \cite[Theorem 2]{P1}  the following result.

\begin{Theorem} \label{T0} Let $V\subset V'$ be an extension of valuation rings containing $\bf Q$, $\Gamma\subset \Gamma'$ the value group extension of $V\subset V'$  and $\val:\Gamma'\to K'^{*}$ the valuation of $V'$. Then $V'$ is a filtered colimit of smooth $V$-algebras  if and only if  the following statements hold
\begin{enumerate}

\item for each $\qq\in \Spec V$ the ideal $\qq V'$ is prime,

\item  for any prime ideals $\qq_1,\qq_2\in \Spec V$ such that $\qq_1\subset \qq_2$ and $\height(\qq_2/\qq_1)=1$  and any $x'\in \qq_2V'\setminus \qq_1'$ there exists $x\in V$ such that $\val(x')=\val(x)$,  where $\qq_1'\in \Spec V'$ is the prime ideal corresponding to the maximal ideal of $V_{\qq_1}\otimes_V V'$, that is the maximal prime ideal of $V'$ lying on $\qq_1$.
\end{enumerate}
\end{Theorem}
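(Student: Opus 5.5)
Both implications go through flatness, going-down and a reduction along the prime spectrum of $V$; the substantial input is Zariski's local uniformization in characteristic zero.

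\emph{Necessity.} Suppose $V'=\varinjlim B_i$ with each $B_i$ a smooth $V$-algebra. For (1), fix $\qq\in\Spec V$. Then $B_i/\qq B_i$ is smooth over the valuation domain $V/\qq$, hence flat over it and in particular torsion free, and localizations of smooth algebras over a normal domain are normal. Passing to the limit, $V'/\qq V'$ is a filtered colimit of torsion-free $V/\qq$-algebras. A valuation-theoretic argument (the ideals $\qq V'$ are totally ordered and radical-like) then shows that $\qq V'$ is prime. A cleaner route is to check that $\qq V'$ is radical and has a unique minimal prime over it, using going-down for the flat extension $V\to V'$.

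For (2), localize at $\qq_2$ and pass modulo $\qq_1$. This reduces us to a rank-one valuation ring $W=(V/\qq_1)_{\qq_2}$ with extension $W'$, again a filtered colimit of smooth $W$-algebras. Take $x'$ as in (2). Its image factors through some smooth $B_i$ and a $W$-morphism $B_i\to W'$. Because $B_i$ is smooth, the map is a $W'$-point of a smooth scheme. Standard approximation over the rank-one valuation ring $W$ (a Hensel/Newton-type argument on the smooth locus, after completion) lets us move the point to a nearby one whose coordinates have values in $\Gamma$ up to the relevant order. This forces $\val(x')\in\Gamma$. I expect this step to need care, since the image value of $x'$ must be preserved.

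\emph{Sufficiency.} This is the main direction. Given a finitely generated $V$-subalgebra $A\subset V'$, we must factor $A\to V'$ through a smooth $V$-algebra. The plan is induction on the structure of $\Spec V$, using (1) to split $V\subset V'$ into the "residual" extensions $V_{\qq_2}/\qq_1\subset V'_{\qq_2'}/\qq_1'$ for consecutive primes.
\begin{itemize}
\item At each height-one step, condition (2) says this rank-one piece has the same value group up to the part coming from $V$.
\item That piece therefore splits into a residue field extension, which in characteristic zero is a filtered colimit of smooth algebras via separable generation, followed by an immediate-type extension.
\item The immediate-type part is handled by Zariski uniformization: a finitely generated subalgebra of an extension of valuation rings containing $\bf Q$ is dominated by a regular local ring, and here it is even smooth over the base, in the form of \cite{Z} and its extension to arbitrary base valuation rings.
\end{itemize}
Finally, glue the steps with the fact that a composition of filtered colimits of smooth algebras, and pullbacks along $V\to V/\qq$, $V\to V_\qq$, remain of this form. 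Gluing across the nontrivial rank uses a Néron-type desingularization argument.

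The main obstacle is this gluing together with the uniformization step. Zariski only treats function fields over a field, so one must first reduce the finitely generated situation to the case where $V$ has finite rank. This is done by writing $V$ as a filtered union of valuation rings of finite rank, and condition (2) at every jump is exactly what makes the value groups match at that step.
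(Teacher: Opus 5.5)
The paper does not prove this statement. It quotes it from \cite[Theorem 2]{P1} as background, so there is no in-paper argument to compare with, and your proposal has to stand on its own.

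\textbf{Necessity of (1).} This part is fine once reduced to its core. $V'/\qq V'=\varinjlim B_i/\qq B_i$ is reduced, because each $B_i/\qq B_i$ is smooth over the domain $V/\qq$. In a valuation ring every radical ideal is prime. Torsion-freeness, normality and going-down play no role.

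\textbf{Necessity of (2).} Here there is a genuine gap. ``Moving the point'' on the smooth scheme changes the point, and hence changes $\val(x')$. Smoothness of the source imposes no restriction on values by itself. Indeed, $V[T]\to V'$, $T\mapsto x'$, exists for every $x'$. Moreover, both $K\subset K[X]_{(X)}$ and $V\subset V'=\{h\in K[X]_{(X)}:h(0)\in V\}$ (for $V$ a DVR with uniformizer $\pi$) are ind-smooth with $\val(X)\notin\Gamma$.

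The hypothesis that has to be used is $x'\notin\qq_1'$, which gives $c\in V\setminus\qq_1$ with $c\in x'V'$. One way to conclude:
\begin{enumerate}
\item Factor $V[Y,Z]/(YZ-c)\to V'$, $Y\mapsto x'$, $Z\mapsto c/x'$, through a smooth $B$. Let $y,z$ be the images of $Y,Z$ in $B$, and $P$ the preimage of $\mm'$.
\item Since $B_P$ is $V$-flat with integral closed fibre, $(B_P)_{\mm B_P}$ is a valuation ring with value group $\Gamma$. It is essentially \'etale over $V[T]_{\mm V[T]}$ for \'etale coordinates $T$ near $P$.
\item Elements of $B_P\setminus\mm B_P$ are nonzerodivisors on each $B_P/aB_P$, $a\in V$, by the fibrewise flatness criterion over $V/aV$.
\item Hence $y=ay_0$ and $z=a'z_0$ with $a,a'\in V$ and $y_0,z_0\notin\mm B_P$. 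Then $y_0z_0=c/(aa')$ is a unit of $V$, so $y_0$ is a unit of $B_P$, and $\val(x')=\val(a)\in\Gamma$.
\end{enumerate}
Nothing playing this role appears in your sketch.

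\textbf{Sufficiency.} You give a plan whose decisive steps are assumed rather than proved.
\begin{enumerate}
\item Zariski's theorem gives ind-smoothness of a valuation ring over a ground field of characteristic $0$. Its ``extension to arbitrary base valuation rings'' is essentially the statement you are proving, so invoking it is circular.
\item In residue characteristic $0$ the immediate part is normally treated with pseudo-convergent sequences. Those of transcendental type give unions of polynomial algebras $V[(x'-a_\rho)/b_\rho]$. Immediate algebraic extensions lie in the Henselization, which is ind-\'etale.
\item The values that (2) allows outside the height-one layers, such as $\val(X)$ in the example above, are exactly what your ``gluing'' must absorb, via algebras like $V[X/\pi^n]$ as in Lemma~\ref{g}. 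That ind-smoothness of $V_\qq\to V'\otimes_V V_\qq$ and of $V/\qq\to V'/\qq V'$ patches to ind-smoothness of $V\to V'$ is a genuine lemma, and ``N\'eron-type desingularization'' does not supply it.
\item The reduction to finite rank through $V\cap K_0$ is unjustified. Nothing you say shows that (1) and (2) pass to $V\cap K_0\subset V'\cap K'_0$, whose spectrum may have height-one jumps that $V$ lacks.
\end{enumerate}
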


An {\em immediate} extension of valuation rings is an extension inducing trivial extensions on both the residue field and the   value group.

If the characteristic of the residue field of $V$ is positive then  an immediate extension $V'$ of $V$ might not be a filtered colimit of smooth $V$-algebras  as shows for example \cite[Example 3.13]{Po} (see also \cite[Remark 6.10]{Po})
 inspired by \cite[Sect 9, No 57]{O}.

 For immediate extensions in positive characteristic we have  the following result.
 
\begin{Theorem}(\cite[Theorem 4]{P3}, \cite[Remark 8]{P''})\label{T1}
Let $V\subset V'$ be an  immediate extension of valuation rings with the residue field of positive characteristic and $K\subset K'$ its fraction field extension. If $K'=K(x)$ for some algebraically independent system of elements $x$ over $K$ 
then $V'$ is a filtered union of its smooth $V$-subalgebras. 
\end{Theorem}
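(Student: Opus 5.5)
We plan to reduce to the case of a single transcendental element by induction, and then treat one variable using pseudo-convergent (Cauchy) sequences in the sense of Ostrowski and Kaplansky. Write $x=(x_1,\dots,x_n)$ and set $K_i=K(x_1,\dots,x_i)$, $V_i=V'\cap K_i$. Each $V_{i}\subset V_{i+1}$ is again immediate, since $V\subset V_i\subset V'$ and $V\subset V'$ is immediate. A filtered union of smooth algebras over a filtered union of smooth algebras is again such a union. Indeed, given a finite subset of $V_{i+1}$, we can take a smooth $V_i$-subalgebra containing it. This is of finite presentation, so it descends to a smooth algebra over some smooth $V$-subalgebra of $V_i$, and it can then be enlarged inside $V_{i+1}$. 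Thus it suffices to treat $n=1$, $K'=K(x)$ with $x$ transcendental over $K$. The descent step needs some care: after the limit argument, the descended algebra must still embed into $V_{i+1}$.

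For $n=1$ the extension is immediate and $x$ is transcendental. By Kaplansky, $x$ is then a pseudo-limit of a pseudo-convergent sequence $(v_j)$ in $K$, which we may take in $V$ after scaling, with no pseudo-limit in $K$. Moreover $(v_j)$ is of transcendental type: for every $f\in K[X]$ the values $\val(f(v_j))$ are eventually constant and equal to $\val(f(x))$. Let $E$ be a finite subset of $V'$. Each element is $f(x)/g(x)$ with $f,g\in V[X]$, and $\val(f(x)/g(x))\geq 0$. For $j$ large, $\val(g(x))=\val(g(v_j))$ and $\val(x-v_j)$ increases. We then write $x=v_j+d_j y$ with $d_j\in V$, $\val(d_j)=\val(x-v_j)$, and $y$ a unit of $V'$. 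Expanding $f(x)=\sum_k f^{(k)}(v_j)d_j^k y^k/k!$ in Hasse derivatives, the transcendental type gives a dominant term whose value equals $\val(g(v_j))$ up to units. Dividing by a suitable $c\in V$ with $\val(c)=\val(g(x))$, we can write $f(x)/c$ and $g(x)/c$ as polynomials in $y$ with coefficients in $V$.

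The plan is then to show that $B=V[y,1/h(y)]$ is smooth over $V$, being a localization of a polynomial ring in one variable, and that it contains $E$ and lies in $V'$. Here $h$ is the product of the $g(x)/c$ written as polynomials in $y$, and $y$ is chosen so that these are units in $V'$. The key point is that $g(x)/c$ is a unit of $V'$, and $B\subset V'$ because $h(y)$ is a unit there. The main obstacle is exactly this: arranging simultaneously, for all denominators in $E$, that after the substitution $x=v_j+d_jy$ each $g$ becomes $c\cdot(\text{unit polynomial value})$ with $c\in V$. In positive residue characteristic the Hasse-derivative dominant term may be a $p$-power term, so one must use Kaplansky's lemma on the eventual dominance of a single index $k$ in $\val(f^{(k)}(v_j))+k\val(x-v_j)$. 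If this is insufficient, we would iterate the substitution with a second pseudo-convergent step, still staying inside $V[\text{one variable}]$ localized, or pass to a finitely generated polynomial algebra in several variables with relations solved smoothly. That last route is where the argument of \cite{P3} presumably needs its extra work.
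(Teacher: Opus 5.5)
\textbf{There is a genuine gap, and in fact the statement as quoted fails in the counterexample below.} The paper does not prove this statement; it only quotes it from earlier work. So your argument has to stand on its own, and it breaks at the sentence ``Moreover $(v_j)$ is of transcendental type''. Transcendence of $x$ does not imply this once $K$ has immediate algebraic (defect) extensions, which is exactly what positive residue characteristic allows.

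\emph{Counterexample to your claim.} Let $K$ be the henselization of the perfect hull of $\mathbf{F}_p(t)$ with the $t$-adic valuation. Let $h=X^p-X-t^{-1}$ and let $a=\sum_{i\geq 1}t^{-1/p^i}$, a root of $h$ in the Hahn field $\mathbf{F}_p((t^{\mathbf{Q}}))$. Then $K(a)/K$ is immediate of degree $p$, and $\val(a-b)<0$ for every $b\in K$. Choose $y\in\widehat{K}$ with $\val(y)>0$, transcendental over $K(a)$, and put $x=a+y$. Then $K(x)\subset K(a,y)\subset\widehat{K(a)}$ is immediate over $K$, and $x$ is transcendental. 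Now take any pseudo-convergent $(v_j)$ in $K$ with pseudo-limit $x$. We have $\val(x-v_j)<0$ and $h(v_j)=h(x)-(x-v_j)^p+(x-v_j)$, so $\val(h(v_j))=p\,\val(x-v_j)$ strictly increases, while $\val(h(x))=\val(y^p-y)>0$. So the sequence is of algebraic type and your claim fails for $f=h$.

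Correspondingly, no algebra $V[y'][1/g(y')]$ with $y'=(x-v)/d$ and $g(y')$ a unit can contain $h(x)/c$ when $\val(c)=\val(h(x))$. Indeed, write $h(v+dY)=e\,G(Y)$ with $G\in V[Y]$ primitive. For $j\gg 0$ we have $v_j=v+d\eta$ with $\eta\in V$, so $\val(e)\leq\val(h(v_j))<\val(c)$. Membership would then force $Gg^N\in\mathfrak{m}V[Y]$, i.e.\ $\overline{g}=0$.

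\emph{The conclusion itself fails here.} Put $v_n=\sum_{i\leq n}t^{-1/p^i}$, $d_n=t^{-1/p^{n+1}}$, $y_n=(x-v_n)/d_n$ (a unit of $V'$) and $\epsilon_n=p^{-n}-p^{-n-1}$. One computes
$y_n=1+t^{\epsilon_{n+1}}y_{n+1}$ and $(y_n-1)^p=t^{\epsilon_n}\bigl(y_n+t^{1/p^{n+1}}h(x)\bigr)$, with $h(x)\in V'$.
Suppose a smooth $V$-subalgebra $B\subset V'$ contained $y_M$ and $h(x)$.
\begin{enumerate}
\item Since $V$ is perfect and $B$ is smooth, Frobenius is faithfully flat on $B$. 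So $y_M-1\in t^{\epsilon_{M+1}}B$, i.e.\ $y_{M+1}\in B$, and inductively $y_n\in B$ for all $n\geq M$.
\item Since $V$ is henselian and $B\to V'\to k$ is a $k$-point, there is a $V$-algebra map $\sigma\colon B\to V$.
\item Then $b=v_n+d_n\sigma(y_n)\in K$ does not depend on $n$, and $\val(b-v_n)\geq\val(d_n)=\val(a-v_n)$ for all $n\geq M$.
\item This forces $\val(a-b)\geq 0$, a contradiction.
\end{enumerate}
So the quoted statement needs an additional hypothesis, e.g.\ that each $x_i$ be a pseudo-limit of a sequence of transcendental type over $K(x_1,\dots,x_{i-1})$.

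\emph{Under that hypothesis your plan works.} The reduction to one variable is fine: a smooth $A$-algebra $B_A$ is $A$-flat, so $B_A\to B_A\otimes_A V_i$ is injective, which settles the embedding issue you raised. There is also no simultaneity obstacle. For $j\gg 0$ the term $f(v_j)$ strictly dominates every $f^{(k)}(v_j)d_j^k$, $k\geq 1$, for each of finitely many $f$. Hence $f(x)\in f(v_j)\bigl(1+\mathfrak{m}V[y]\bigr)$, and $V'$ is the filtered union of the $V[y_j][1/u]$ with $u\in 1+\mathfrak{m}V[y_j]$.
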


Since such theorem in positive characteristic fails in general  we proved a weaker result (in this paper we understand that $p=\ch(k)$ could be also $0$).

\begin{Theorem}(\cite[Theorem 6]{P3} and \cite[Proposition 8]{P2}, especially the arxiv version)\label{T2} Let  $ V'$ be an  immediate extension  of a valuation ring $V$.  Then $V'$ is a filtered
 union of its complete intersection $V$-subalgebras.
\end{Theorem}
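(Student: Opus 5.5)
Since every finitely generated $V$-subalgebra of $V'$ lies in one of the form $V'\cap K(y)$ with $y$ a finite system of elements of $V'$, and a filtered union of filtered unions is again one, I will reduce to the case $K'=K(y)$ finitely generated. The reduction goes as follows. Given finitely many $z_1,\dots,z_n\in V'$, I will show that there is a complete intersection $V$-subalgebra $B\subset V'$ containing them. Then the family of such $B$ is filtered, because two such algebras can be enlarged by applying the same argument to the union of their generators. The task therefore becomes: construct a complete intersection $V$-algebra $B\subset V'$ with $z\in B$.

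Next I will split $K\subset K(z)$ into a purely transcendental part and an algebraic part. I choose a transcendence basis $x\subset V'$ of $K(z)$ over $K$ and put $L=K(x)$. Then $V'\cap L$ is an immediate extension of $V$, since it sits between $V$ and the immediate extension $V'$, and $L$ is purely transcendental over $K$. By Theorem \ref{T1} in residue characteristic $p>0$, or by Theorem \ref{T0} when $V\supset\mathbf Q$, the ring $V'\cap L$ is a filtered union of smooth $V$-subalgebras. Smooth algebras are complete intersections, so this disposes of the transcendental part. It remains to handle the algebraic extension $V'\cap L\subset V'\cap K(z)$, which is finite on fraction fields and again immediate. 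If I can write it as a filtered union of complete intersection $(V'\cap L)$-subalgebras, I can then descend: a complete intersection over $V'\cap L$ is presented by finitely many equations, whose coefficients lie in some smooth $V$-subalgebra $A$. A complete intersection over a smooth $A$ is a complete intersection over $V$, so composing presentations finishes the argument.

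For the algebraic immediate part I plan to use pseudo-convergent sequences (Kaplansky, Ostrowski). Each element $z_i$ algebraic over $L$ is a limit of a pseudo-convergent sequence $(v_j)$ in $L$ of algebraic type. I take a polynomial $f$ of minimal degree with $\val(f(v_j))$ eventually increasing. Writing $z=v_j+d\,t$ with suitable $d\in V'\cap L$, the element $t$ satisfies a polynomial equation $g(T)=f(v_j+dT)/c$ over $V'\cap L$. The Taylor expansion shows that, by the choice of $f$, the coefficient $g'(t)$ behaves like a unit up to a controlled factor. This gives a presentation $(V'\cap L)[T]/(g)$, which is a hypersurface, hence a complete intersection, and which embeds in $V'$. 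I will adjoin the $z_i$ one at a time, performing this construction inductively over the complete intersection algebra obtained at the previous step.

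The main obstacle is the inseparable or defect case in characteristic $p$. There $f'$ may vanish on the sequence, so $g$ need not be étale-like, and one has to check that $(V'\cap L)[T]/(g)\to V'$ is still injective. For this I would argue that $g$ is irreducible of minimal degree, so its kernel is zero by degree considerations, and flatness over the valuation ring holds because the quotient is torsion free. A torsion-free quotient by a single monic-up-to-unit equation is then a complete intersection.
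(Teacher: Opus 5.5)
The paper gives no proof of Theorem~\ref{T2}; it is quoted from \cite{P3} and \cite{P2}. Judged on its own, your outline has a genuine gap in the algebraic step, which is where the whole difficulty lies.

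First, the equation you write down need not hold. Since $g(t)=f(z)/c$, $t$ is a root of $g$ only if $f(z)=0$, and a Kaplansky polynomial of minimal degree need not vanish at the pseudo-limit you started from. Its degree is at most $[L(z):L]$ but can be strictly smaller. Suppose $\val(e)>\val(z_0-c)$ for all $c\in L$. Then $z_0$ and $z=z_0+e$ have exactly the same pseudo-convergent sequences in $L$ converging to them. So all these sequences have Kaplansky degree at most $[L(z_0):L]$, while $[L(z):L]$ can be larger. This can happen when $\val(z_0-L)$ is bounded above, e.g.\ for Artin--Schreier defect extensions. In that case no choice of sequence or of $f$ gives $f(z)=0$.

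Moreover, one hypersurface containing one element costs nothing. Over any valuation ring $R$ one has $R[z]\cong R[Z]/(h)$ with $h$ primitive irreducible, as at the end of the proof of Lemma~\ref{gen}. Your Taylor/separability discussion is also beside the point: a hypersurface is a complete intersection whether or not $g'(t)$ is a unit.

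What pseudo-convergent sequences are really needed for is a statement about the whole ring. When $f(z)=0$, the valuation ring of $L(z)$ is the increasing union of the hypersurfaces $R[(z-v_j)/d_j]$, because $\val(q(z))=\val(q(v_j))$ for $j\gg 0$ whenever $\deg q<\deg f$. One also needs a way to reach the case $f(z)=0$ through a tower of valued fields. Neither appears in your proposal.

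Second, the passage from one element to several fails as described. After adjoining $z_1$, the base $C_1=R[t_1]$ is in general no longer a valuation ring. Your injectivity argument is Gauss's lemma over a valuation ring, so it does not apply over $C_1$. Even over a valuation ring, torsion-freeness comes from primitivity of $g$; degree considerations only put the kernel inside $gL[T]$.

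Concretely, suppose both sequences lie in $L$, as you say. Then $C_1[T]/(g_2)=R[T_1,T_2]/(g_1,g_2)$ has generic fibre $L(z_1)\otimes_L L(z_2)$. This is not a domain unless $L(z_1)$ and $L(z_2)$ are linearly disjoint over $L$, so its map to $V'$ has a kernel. If instead the second sequence is taken in $L(z_1)$, the coefficients of its Kaplansky polynomial lie in $V'\cap L(z_1)$, not in $C_1$.

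The same problem affects your descent from $R=V'\cap L$ to a smooth subalgebra $A$. $R$ is in general not flat over $A$, so $A[Z]/(P)$ need not be a subring of $V'$, nor need $P$ be regular in $A[Z]$. Thus ``composing presentations'' yields no subalgebra.

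One way to repair this:
\begin{enumerate}
\item Work along a tower of valuation rings $V_i\subset V_{i+1}$ inside $V'$.
\item Show that each $V_{i+1}$ is an increasing union of hypersurfaces $V_i[T]/(g)$.
\item Descend each $g$ to complete intersection subalgebras $B\subset V_i$ containing its coefficients and the inverse of one of its unit coefficients.
\end{enumerate}
Then $B[T]/(g)$ is $B$-torsion free by Dedekind--Mertens, embeds in $V_{i+1}$, and is a complete intersection over $V$ by \cite[Lemma 6]{P2}.

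Two minor points. In residue characteristic $0$, Theorem~\ref{T0} only gives a filtered colimit, not a union of subalgebras. And a smooth algebra is in general only locally a complete intersection.
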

A {\em complete intersection}   $V$-algebra   is a  $V$-algebra of type $C/(P)$, where $C$ is  a polynomial $V$-algebra and $P$ is a regular system of elements of $C$.  If $V$ is a filtered colimit of complete intersection algebras we  write shortly {\em ind-ci}. If $V$ is a filtered union of its complete intersection subalgebras we  write shortly {\em injectively ind-ci}.
 
 It is also important  to study a theorem as above for non-immediate extensions. In pure characteristic $0$ this is done in Theorem \ref{T0}.  In pure characteristic $p>0$ such results appeared  in \cite[Theorem 6.2]{KT} and in \cite[Proposition A 10]{KST} following some ideas from \cite[Proposition 6.3.13]{GR}.
 
 Meanwhile in \cite{P''} we got the following theorem similar to \cite[Theorem 9]{P'}.
 
 \begin{Theorem}(\cite[Theorem 22]{P''}) \label{T3} Let $V$ be a Henselian mixed characteristic valuation ring, $k$ its residue field, $p= \ch(k)$, $\val$ its valuation and $\Gamma$ its value group. Assume that  $p>0$ and  $\Gamma /{ \bf Z} \val(p)$ has no $p$-torsion. Then  $V$ is  ind-ci  over ${\bf Z}_{(p)}$.
\end{Theorem}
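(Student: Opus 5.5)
The plan is to factor ${\bf Z}_{(p)}\subset V$ through an intermediate valuation ring $V_0$ and treat the two pieces separately. The first piece ${\bf Z}_{(p)}\subset V_0$ should be built from the value group and the residue field. The second piece $V_0\subset V$ should be immediate, so that Theorem \ref{T2} applies to it. Since a filtered colimit of ind-ci algebras over an ind-ci algebra is again ind-ci (base change and composition of complete intersection presentations), it is enough to show that $V_0$ is ind-ci over ${\bf Z}_{(p)}$.

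\textbf{Step 1: a standard model for the value group and residue field.} First reduce to the case where $\Gamma$ and $k$ are finitely generated, in the sense of finitely generated subgroups $\Gamma_1\ni \val(p)$ and finitely generated subfields $k_1\subset k$. Any finite system of elements of $V$ lies in a valuation subring whose value group and residue field are such finitely generated objects. For fixed $\Gamma_1$ and $k_1$, build a ring $A$ as follows.
\begin{enumerate}
\item Choose a basis of $\Gamma_1$ adapted to $\val(p)$. Because $\Gamma/{\bf Z}\val(p)$ has no $p$-torsion, $\Gamma_1/{\bf Z}\val(p)\cong {\bf Z}^r\oplus T$ with $T$ finite of order prime to $p$.
\item Take monomial-type algebras ${\bf Z}_{(p)}[Y_1,\dots,Y_r,Z_j]/(Z_j^{n_j}-p^{a_j}u_j\prod Y^{b})$ with $p\nmid n_j$. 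These relations are regular and, after localizing, étale in the $Z_j$ directions, since the derivative $n_jZ_j^{n_j-1}$ is a unit.
\item Adjoin lifts of a transcendence basis of $k_1$ as free variables.
\item Adjoin lifts of the finite separable part of $k_1$ by monic polynomials with separable reduction; these are standard étale.
\item Handle the inseparable part of $k_1$ by polynomials $T^{p}-c$ over the current ring. These are monic, hence regular, so the result is a complete intersection.
\end{enumerate}
Henselianity of $V$ gives roots for the étale equations inside $V$, so $A\to V$ exists and its image is a valuation-like subring realizing $\Gamma_1$ and $k_1$.

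\textbf{Step 2: factoring and assembling.} Next, show that every finite subset of $V$ factors through a complete intersection ${\bf Z}_{(p)}$-algebra. Take $V_1$ to be $V\cap\operatorname{Frac}(\text{image of }A)$, localized and Henselized. Then $V_1$ is a filtered colimit of localizations of étale algebras over $A$, hence ind-ci over ${\bf Z}_{(p)}$. Enlarge $\Gamma_1$ and $k_1$ until they exhaust the value group and residue field of the valuation ring generated by the given finite subset. The extension from $V_1$ to $V_1'=V\cap K_1(\text{finite set})$ is then immediate, and Theorem \ref{T2} writes $V_1'$ as a filtered union of complete intersection $V_1$-subalgebras. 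Composing the two presentations and passing to the filtered system gives the claim.

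\textbf{Main obstacle.} The hardest step is the value-group part when $\val(p)$ is involved with non-trivial torsion or divisibility. A relation like $Z^{n}=p\cdot(\text{unit})$ with $p\mid n$ would not be étale, and its root might not lie in $V$. This is exactly where the hypothesis that $\Gamma/{\bf Z}\val(p)$ has no $p$-torsion is needed: it guarantees that every exponent $n_j$ can be chosen prime to $p$. The inseparable residue-field extensions must also be handled without breaking the complete-intersection property. To keep this under control, I would follow the construction of \cite[Theorem 9]{P'} and check at each adjunction that the new equations form a regular sequence, using flatness over ${\bf Z}_{(p)}$ and that the reductions mod $p$ of the equations are nonzerodivisors.
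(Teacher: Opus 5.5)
\textbf{There is a genuine gap in passing from finitely generated data to $V$ itself.}

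The reduction at the start of Step~1 is false. Suppose $R\subset V$ is a valuation ring dominated by $V$ that contains a finite set $S$. Then $R=V\cap\operatorname{Frac}(R)\supseteq V\cap{\bf Q}(S)$, so the value group of $R$ contains that of $V\cap{\bf Q}(S)$. That group need not be finitely generated: already for one transcendental $x$ it can be ${\bf Z}[1/\ell]\val(p)$ with $\ell\neq p$, which your hypothesis allows. Likewise the residue field of $V\cap{\bf Q}(S)$ need not be finitely generated over ${\bf F}_p$.

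For the same reason Step~2 does not close up. Enlarging $\Gamma_1,k_1$ enlarges $K_1$, and hence possibly the value group and residue field of $V\cap K_1(S)$. So nothing forces $V_1\subset V_1'$ to become immediate at any finite stage. Moreover, the rings built for pairs $(k_1,\Gamma_1)\subset(k_2,\Gamma_2)$ are in general not nested, so there is no filtered system to pass to.

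This is exactly the obstacle the paper's route is built to overcome. The statement is the case $V={\bf Z}_{(p)}$ of Theorem~\ref{t9}, whose proof follows \cite{P''}; note that $k/{\bf F}_p$ is automatically separable. The argument runs as follows:
\begin{enumerate}
\item The Henselian rings $T_{k_1,\Gamma_1}$ are encoded by systems of polynomial equations.
\item Proposition~\ref{kes} realizes all of them compatibly in an ultrapower.
\item Iterating ultrapowers produces a valuation ring $\wt A$, injectively ind-ci over the base, inside a colimit $\wt V$ of iterated ultrapowers of $V$, with $\wt A\subset\wt V$ immediate.
\item Theorem~\ref{T2} is applied to this immediate extension.
\item One descends to $V$ because a finite polynomial system solvable in $\wt V$ is solvable in $V$, and concludes with \cite[Lemma 1.5]{S}.
\end{enumerate}
Your proposal has no substitute for this compactness step.

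\textbf{The finite-level construction also needs repair.}

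Even for a fixed finitely generated $\Gamma_1$, the ring $V\cap\operatorname{Frac}(A)$ is not a filtered colimit of localized \'etale $A$-algebras. If $\val(x)$ and $\val(p)$ are ${\bf Z}$-independent, this ring must contain every $x^{a}/p^{b}$ and $p^{b}/x^{a}$ ($a,b\ge 0$) of nonnegative value. These lie neither in the local ring of ${\bf Z}_{(p)}[x]$ at $(p,x)$ nor in its Henselization. The paper obtains such rings in one of two ways:
\begin{enumerate}
\item as filtered unions of the algebras $V[t_{i0},t_{i1}]$ attached to a chain of free submonoids exhausting the cone $G_{\ge 0}$ of $G={\bf Z}\val(p)+{\bf Z}\val(x)$ (Lemma~\ref{EPGR-lem}, Proposition~\ref{p9});
\item or as filtered unions of the algebras $V[x/a_{\tau},a_{\nu}/x]$ (Lemma~\ref{g}).
\end{enumerate}
This filtration is the real content of the value-group step, and it is missing from your plan.

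Two smaller errors remain.
\begin{enumerate}
\item $Z^{n}-p^{a}u\prod Y^{b}$ is not \'etale in $Z$. Since $Z$ maps to an element of positive value, $nZ^{n-1}$ is not a unit; the relation is regular but tamely ramified. The hypothesis $p\nmid n$ enters instead as in Proposition~\ref{pr}: Hensel's lemma applied to $Z^{n}-u$, for a unit $u\equiv 1$, normalizes $x$ so that $x^{n}$ lies in the smaller field.
\item Item (5) is vacuous, because ${\bf F}_p$ is perfect and so a finitely generated $k_1$ is separably generated. Were it needed, $T^{p}-c$ for an arbitrary lift $c$ would in general have no root in $V$.
\end{enumerate}
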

 
 An abelian group $(G,+)$ has {\em no $p$-torsion} if no nonzero element of it, could be  killed  by powers of $p$.

Thus the above theorem studies a non-immediate extension ${\bf Z}_{(p)}\subset V$ of a mixed  characteristic valuation rings. The goal of this paper is to extend it to general non-immediate extensions.

\begin{Theorem} \label{T4} Let  $ V\subset V'$ be an   extension  of  valuation rings,   $\Gamma\subset \Gamma'$ its value group extension, $k\subset k'$ its residue field extension,  and $p=\ch(k)$.
  Assume that 
   $V'$ is Henselian,
 $k'/k$ is separable,
 and   $\Gamma'/\Gamma$ has no $p$-torsion, if $p>0$.
  Then $V'$ is   ind-ci over $V$.  
\end{Theorem}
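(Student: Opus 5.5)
The plan is to split $V\subset V'$ into an intermediate extension that carries all of the residue field and value group extension, followed by an immediate extension. Then Theorem \ref{T2} handles the immediate part, and we prove that the intermediate piece is ind-ci by explicit constructions. We then combine the two steps using the fact that a filtered colimit of ind-ci algebras over an ind-ci algebra is again ind-ci over the base. This uses that a complete intersection over a complete intersection is a complete intersection, together with the standard finite presentation argument for limits, which we carry out as in \cite{P''}.

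\textbf{Step 1: the intermediate extension $V_1$.}
\begin{enumerate}
\item Since $k'/k$ is separable, write $k'$ as a filtered union of finitely generated subextensions $k(\bar y)$. Each of these is separably generated: a purely transcendental part $k(\bar t)$ followed by a finite separable part $k(\bar t)[\bar u]/(g)$.
\item Lift $\bar t$ to elements $t\in V'$. The subring $V[t]_{\mm_{V'}\cap V[t]}$ is the Gauss-valuation localization, which is a filtered colimit of localizations of polynomial algebras, hence smooth.
\item Lift $g$ to a monic $G$. Because $V'$ is Henselian, a simple root $\bar u$ lifts to $u\in V'$, and $V[t,u]$ is presented by the single equation $G(u)=0$. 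This is a complete intersection (indeed étale after localization).
\item For the value group, we may reduce modulo torsion-free versus torsion parts. Choose $\gamma_1,\dots,\gamma_r\in\Gamma'$ independent modulo $\Gamma$; adjoining $z_i\in V'$ with $\val(z_i)=\gamma_i$ gives monomial valuations, which is again a localization of a polynomial ring.
\item If $n\gamma\in\Gamma$ with $n$ prime to $p$, say $n\gamma=\val(a)$, pick $z$ with $\val(z)=\gamma$. Then $z^n=a\epsilon$ with $\epsilon$ a unit. Using Henselianity of $V'$ and the fact that $p\nmid n$, we may adjust $z$ so that $z^n=a e$ with $e$ lifting a residue class already adjoined. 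The algebra $V[\dots][Z]/(Z^n-ae)$ is then a complete intersection.
\item The hypothesis that $\Gamma'/\Gamma$ has no $p$-torsion ensures that all torsion in $\Gamma'/\Gamma$ is prime to $p$, so this case is sufficient.
\end{enumerate}
Taking the filtered union of these valuation subrings yields a valuation ring $V_1\subset V'$ with the same residue field $k'$ and value group $\Gamma'$. $V_1$ is ind-ci over $V$, assembled from the ci pieces above through their localizations at the center of $V'$. Localizations of complete intersections are filtered colimits of complete intersections, since $C[1/s]=C[T]/(sT-1)$.

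\textbf{Step 2: the immediate part.} The extension $V_1\subset V'$ is immediate, so Theorem \ref{T2} makes $V'$ injectively ind-ci over $V_1$. Transitivity then finishes the proof: a finitely presented $V$-map into $V'$ factors through some ci $V_1$-subalgebra $D=C_1/(P_1)$. The coefficients of $C_1$ and $P_1$ lie in some ci piece $B$ of $V_1$. Descending $D$ to $B$ and checking that $P_1$ stays regular, by flatness over the valuation ring or after enlarging $B$, gives a ci $V$-algebra through which the map factors.

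\textbf{Main obstacle.} The hard part is Step 1 with mixed characteristic and tame torsion: making the roots $z$ compatible simultaneously with residue field lifts and earlier adjunctions, so that each finite stage is really a complete intersection and the system is filtered. I would first treat the case where $\Gamma'/\Gamma$ is finitely generated and $k'/k$ is finitely generated, mimicking the proof of Theorem \ref{T3} in \cite{P''}, and then pass to the colimit. A secondary subtlety is the descent of regularity of $P_1$ in Step 2. There I would use that a system in $C_1$ whose reduction modulo the maximal ideal and whose generic fibre are both regular is regular over a valuation base, via the local flatness criterion.
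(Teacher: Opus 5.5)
There is a genuine gap in Step 1, at ``Taking the filtered union of these valuation subrings yields a valuation ring $V_1\subset V'$''. The subrings you build for different finitely generated data are not nested inside $V'$, and Hensel's lemma does not let you choose them nested. Your closing remark names this obstacle, but ``then pass to the colimit'' assumes exactly what fails. Here are two instances, both with $p>0$.

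(a) $k'/k$ is only separable, not separably generated. For $k'=k(t^{1/p^\infty})$ the separating bases $t^{1/p^n}$ of the pieces $k(t^{1/p^n})$ change with $n$. The inclusion $V[x_n]_{\mm'\cap V[x_n]}\subset V[x_{n+1}]_{\mm'\cap V[x_{n+1}]}$ forces $x_n\in K(x_{n+1})$. Given $x_n$, such a lift $x_{n+1}$ of $t^{1/p^{n+1}}$ must be a root of an equation whose reduction has $t^{1/p^{n+1}}$ as a $p$-fold root, so Hensel's lemma does not produce it. You can choose compatible lifts for finitely many $n$, working from the top down, but nothing gives a compatible infinite sequence in $V'$.

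(b) Once the torsion is removed (which can be done by a transfinite chain of steps as in Proposition \ref{pr}), $\Gamma'/\Gamma$ is torsion free but need not be free. It may contain an infinitely $p$-divisible element, so your item (iv), with finitely many independent $\gamma_1,\dots,\gamma_r$, covers only finitely generated subgroups. Take $\gamma_n\in\Gamma'$ with $p\gamma_{n+1}-\gamma_n\in\Gamma$ and $\val(z_n)=\gamma_n$. Then you only get $z_{n+1}^p=c_nz_nu_n$ with $c_n\in K$ and $u_n$ a unit of $V'$, so in general $V'\cap K(z_n)\not\subset V'\cap K(z_{n+1})$. Repairing this means extracting $p$-th roots of the $u_n$, which Hensel's lemma does not give in residue characteristic $p$.

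Taking the rings generated by all the choices does not help either. For instance, $V'\cap K(z_n,z_{n+1})$ may acquire new values such as $\val(z_n-cz_{n+1}^p)$, and is then no longer of a type your finite-stage constructions control. So there is no $V_1\subset V'$ to feed into Step 2.

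The missing idea is the compactness argument in the paper's proof of Theorem \ref{t9}.
\begin{enumerate}
\item Each Henselian ring $T_{k_1,\Gamma_1}$ (the Henselization of the ring $V_{k_1,\Gamma_1}$ from the finitely generated case) and each inclusion $T_{k_1,\Gamma_1}\subset T_{k_2,\Gamma_2}$ is encoded by polynomial systems $G,F$ over $V$.
\item Every finite subsystem is solvable in $V'$: for finitely many pairs, take a common upper bound $(k'',\Gamma'')$ and use Henselianity to find copies of all the $T_{k_j,\Gamma_j}$ inside $T_{k'',\Gamma''}$.
\item Proposition \ref{kes} then gives a simultaneous solution in an ultrapower $V'_1$ of $V'$. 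This is a genuinely filtered union $A_1\subset V'_1$ with residue field $k'$ and value group $\Gamma'$.
\item Since $A_1\subset V'_1$ is not immediate, the construction is iterated to get $\wt A\subset\wt V'=\varinjlim V'_n$, which is immediate, so Theorem \ref{T2} applies.
\item One returns to $V'$ because a finite polynomial system solvable in $\wt V'$ is solvable in $V'$, and concludes with \cite[Lemma 1.5]{S}.
\end{enumerate}

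Separately, your items (iv)--(v) understate the finite stages. In (iv), $V'\cap K(z)$ is strictly larger than $V[z]_{\mm'\cap V[z]}$: it contains $z/a$ for $a\in V$ with $0<\val(a)<\val(z)$, whenever such $a$ exist. What is needed is the unions of $V[Y,Z]/(YZ-a_\nu/a_\tau)$ from Lemma \ref{g}, together with the monoid argument of Proposition \ref{p9} when $(V/\qq'\cap V)_{\qq\cap V}$ is a DVR. In the torsion case you need the unions of $V[x/t]$ from Lemma \ref{gen} and Proposition \ref{pr}.
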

The proof of the above theorem goes  using Theorem \ref{t9} and its proof follows mainly the proof of Theorem \ref{T3} given in \cite{P''}. 

We owe thanks to Rodica Dinu for some useful comments on our paper. Also we owe thanks to some anonymous referee who found a gap in Lemma \ref{g} and  had some interesting remarks on  our paper especially on the proof of Theorem \ref{t9}.

\vskip 0.3 cm
\section{Extensions of valuation rings with trivial residue field extensions and torsion value group extensions}

\begin{Lemma} \label{gen}
Let $V\subset V'$ be an extension of valuation rings with  trivial residue field extension,  $K\subset K'$ their fraction field extension, $\Gamma\subset \Gamma'$ their value group extension and $x$ an element of $V'$. Let  $\qq\in \Spec V$ correspond to  the minimal prime ideal of $V'/xV'$. Let $\qq'\in \Spec V'$  correspond to the maximal ideal of $V'[1/x]$. Denote $\gamma=\val(x)$.

Assume  that 

\begin{enumerate}
\item $K'=K(x)$  and $[K':K]=m$.

\item $\Ir(x,K)$  is a polynomial over $V$.
\item  $\Gamma'/\Gamma={\bf Z}\gamma\cong  {\bf Z}/m{\bf Z}$. 

\item $W=(V/\qq'\cap V)_{\qq\cap V}$ is not a DVR.
\item $W$ is a filtered union of its smooth $V$-subalgebras.
\end{enumerate}
Then $V'$ is  injectively ind-ci over $V$.
\end{Lemma}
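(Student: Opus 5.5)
The plan is to exhibit $V'$ explicitly as a filtered union of monogenic hypersurface algebras $V[x/c]$, with $c\in V$ and $\val(c)<\gamma$.

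\emph{Description of $V'$.} Let $f=\Ir(x,K)$, monic of degree $m$ and, by (2), with coefficients in $V$. By (3), the values $\val(x^i)=i\gamma$ for $0\le i<m$ lie in pairwise distinct cosets of $\Gamma$ in $\Gamma'$. Hence for $a_i\in K$ we get
$$\val\Big(\sum_{i<m}a_ix^i\Big)=\min_i\big(\val(a_i)+i\gamma\big).$$
Consequently $V'$ is exactly the set of sums $\sum_{i<m} a_ix^i$ with $\val(a_i)+i\gamma\ge 0$ for all $i$. For $0<i<m$ we have $i\gamma\notin\Gamma$, so this inequality is strict. Thus $V'$ is generated as a $V$-module by the monomials $a x^i$ with $a\in K$, $i<m$ and $\val(a)>-i\gamma$.

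\emph{The subalgebras $V[x/c]$.} For $c\in V$ with $0\le \val(c)<\gamma$, put $y=x/c\in V'$. Then $y$ has minimal polynomial $g(Y)=c^{-m}f(cY)$ over $K$. Write $f=\sum_j b_jX^j$. The coefficient of $Y^j$ in $g$ is $b_j/c^{m-j}$. Since $f(x)=0$ forces $\val(b_j)\ge (m-j)\gamma>(m-j)\val(c)$ (otherwise a single term of minimal value would not cancel), $g$ is monic with coefficients in $V$. Therefore $V[y]\cong V[Y]/(g)$ is a complete intersection $V$-subalgebra of $V'$, indeed free of rank $m$.

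The family is filtered. If $\val(c)\le \val(c')<\gamma$, then $x/c=(x/c')(c'/c)$ with $c'/c\in V$, so $V[x/c]\subset V[x/c']$.

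\emph{Exhaustion.} It remains to show that every generator $ax^i$ as above lies in some $V[x/c]$. We have $ax^i=ac^i\,(x/c)^i$, so it suffices to find $c\in V$ with
$$-\val(a)/i\le \val(c)<\gamma,$$
the left side being computed in the divisible hull of $\Gamma'$. A finitely generated subalgebra needs only finitely many such conditions, and these are handled by taking the largest required $\val(c)$.

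This is the step I expect to be the main obstacle: producing elements of $\Gamma$ in the interval $[-\val(a)/i,\gamma)$, i.e. arbitrarily close to $\gamma$ from below. The relevant part of $\Gamma$ is the one seen between the primes $\qq'\cap V$ and $\qq\cap V$, which is exactly the value group of $W$. Hypothesis (4), that $W$ is not a DVR, says this rank-one piece is not discrete, so $\gamma$ should be approximable from below by values in $\Gamma$ within that convex piece. I would first try to prove this density directly from (4).

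If the density fails, or if the interval reaches outside the convex piece where (4) applies, I would use (5). The plan there is to pass to $V\to W$, run the argument above over $W$, and compose with the filtered union of smooth $V$-subalgebras of $W$. Base changes of the hypersurfaces $W[Y]/(g)$ along these smooth algebras remain complete intersections, and a limit argument then brings the presentation back down to $V$. This yields that $V'$ is injectively ind-ci over $V$.
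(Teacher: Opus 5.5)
Your route is the same as the paper's. There too $V'$ is written as the union of the hypersurface algebras $B_s=V[x/t_s]$ with $\val(t_s)<\gamma$. Your preliminary steps are correct: the description $V'=\{\sum_{i<m}a_ix^i:\val(a_i)+i\gamma\ge 0\}$ even gives $V'=C$ without localizing, $c^{-m}f(cY)$ is monic over $V$, and the family is filtered.

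\textbf{The gap.} The proposal stops at the one step that carries content: you never produce $c\in V$ with $-\val(a)/i\le\val(c)<\gamma$. The fallback through (5) is not an argument. Hypothesis (5) contains no information about elements of $\Gamma$ near $\gamma$, and $V'$ is not a $W$-algebra in general; the paper never uses (5). What the paper does here is the following. It notes $\qq\cap V\neq\qq'\cap V$, so $W$ and $W'=(V'/\qq')_{\qq}$ are one-dimensional. It embeds the value group $\Gamma_1$ of $W'$ in $\mathbf{R}$ and observes that the value group of $W$ is dense there, because $W$ is not a DVR. It then takes $t_s\in V$ with $\val(t_s)$ increasing to $\gamma$ in $\mathbf{R}$.

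\textbf{Why (4) cannot close it.} Density in $\mathbf{R}$, which is all (4) gives, only settles the case where $\delta=-\val(a)/i$ has image in $\Gamma_1\otimes\mathbf{Q}\subset\mathbf{R}$ strictly below that of $\gamma$. When the two images coincide, the required $c$ can fail to exist.

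Here is an example. Let $A=\tfrac14\mathbf{Z}+\sqrt2\,\mathbf{Z}\subset\mathbf{R}$ and $\Gamma'=A\times\mathbf{Z}$, ordered lexicographically. Let $\Gamma$ be the kernel of $\Gamma'\to\mathbf{Z}/4\mathbf{Z}$, $(\tfrac n4+\sqrt2k,l)\mapsto n+2l$. Take $V$ any valuation ring with value group $\Gamma$ (e.g.\ a monomial valuation on $k(t_1,t_2,t_3)$), $y\in V$ with $\val(y)=(1,0)$, and $K'=K(x)$ with $x^4=y$.
\begin{itemize}
\item (1)--(3) hold with $m=4$ and $\gamma=(\tfrac14,0)$.
\item $\qq'=0$, and $W=V_{\qq\cap V}$ has value group $\tfrac12\mathbf{Z}+\sqrt2\,\mathbf{Z}$. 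So (4) holds, and (5) holds trivially since $W$ is a localization of $V$.
\item Take $a\in K$ with $\val(a)=(-\tfrac12,1)\in\Gamma$. Then $ax^2\in V'$.
\item Since $V[x/c]$ is $V$-free on $(x/c)^j$, $j<4$, the condition $ax^2\in V[x/c]$ forces $2\val(c)\ge(\tfrac12,-1)$.
\item First coordinates of elements of $\Gamma$ lie in $\tfrac12\mathbf{Z}+\sqrt2\,\mathbf{Z}$, so this gives $\val(c)>\gamma$, i.e.\ $x/c\notin V'$.
\end{itemize}
Comparing $x^2$-coefficients shows that $ax^2$ is not even in the localization of $\bigcup_cV[x/c]$ at $\mm'$. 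So the exhaustion step cannot be derived from (4), and the paper's own density argument covers only the first case.

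\textbf{What a fix needs.} A complete proof needs an extra idea. One option is subalgebras generated by several elements $x^i/t$. In the example, with $u=1/(ac^2)$, $V[x/c,ax^2]\cong V[Y,Z]/(Y^2-uZ,\,Z^2-a^2y)$ is still a complete intersection. Another option is passing through a tower such as $K\subset K(x^2)\subset K(x)$.
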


\begin{proof} 
 An element of $V'$ has the form $f(x)/t$ for some polynomial $f\in V[X]$ of degree $<m$ and $t\in V\setminus \{0\}$, let us say $f=\sum_{i=0}^{m-1}a_iX^i$.
If $a_i,a_j\not =0$, $0\leq i<j<m$ then $\val(a_ix^i)\not =\val(a_jx^j)$, because otherwise we get $\val(a_i)+i\gamma=\val(a_j)+j\gamma$ and so $(j-i)\gamma\in \Gamma$ which is false. Suppose that $\val(a_ix^i)=\min_e\val(a_ex^e)$. It follows that  
$f(x)\in Va_ix^i(1+\mm'\cap V[x])$, $\mm'$ being the maximal ideal of $V'$ and  $\val(f(x)/t)=\val(a_ix^i/t)\geq 0$. When $\val(a_i)\geq \val(t)$ we get $f(x)/t\in V[x]_{\mm'\cap V[x]}$. If $\val(a_i)<\val(t)$ we see that $f(x)/t\in V[x^i/t']_{\mm'\cap V[x^i/t']}$ for  $t'=t/a_i$.
 Actually, $\val(t')<\val(x^i)$ because $i\gamma\not \in \Gamma$. Let $C$ be the $V$-subalgebra of $V'$ generated by all elements of the form $x^i/t$, $0\leq i<m$, $t\in V\setminus \{0\}$ with 
$\val(x^i)> \val(t)$. Then $V'=C_{\mm'\cap C}$.

We show that $C$ is a filtered union of its complete intersection $V$-subalgebras.  Note that $\qq'$ is the biggest prime ideal of $V'$ which does not contain $x$. So  height$(\qq/\qq')=1$ and $\dim W'=1$, $W'=( V'/\qq'V')_{\qq}$. We have $\qq\cap V\not =\qq'\cap V$ because $V'$ is a localization of the integral closure of $V$ in $V'$. Note that $\dim W=1$. Then we may assume that the value group $\Gamma_1$ of  $W'$ is  contained in $\bf R$.

Thus the value group of $W$, a subgroup of $\Gamma_1$,  is dense in $\bf R$ because $W$ is not a DVR.
Let   ${\mathcal T}^-$ be a cofinal subset of $\{\tau\in \Gamma: 0\leq \tau <\gamma\}$.   We may  choose  ${\mathcal T}^-=\{ \val(t_s): s\in {\bf N}\}$ for some $t_s\in V$, with $(\val(t_s))$  an increasing sequence and $\gamma$  its limit in $\bf R$. Then $C=\cup_{s\in {\bf N}} B_s$, $B_s=V[x/t_s]$. Indeed, if $x^r/t'\in C$ for some $r\in {\bf N}$, $t'\in V\setminus\{0\}$ then choose $s\in {\bf N}$ such that $\gamma>\val(t_s)>(1/r)\val(t')$ and we have $x^r/t'\in B_s$ because  
$$x^r/t'=(x/t_s)^r (t_s^r/t')\in V[(x/t_s)^r]\subset V[x/t_s].$$

Clearly, $B_s\subset B_{s'}$ for $s<s'$.  It is enough to see that $B_s$ is ind-ci over $V'$. But $V[z]$ is  ind-ci over $V$ for any $z\in V'\setminus V$. Indeed, $z$ is a root of  an irreducible polynomial $h\in K[Z]$, which could be considered in $V[Z]$ after multiplication with a nonzero element of $V$. Moreover, we may choose $h$ to be primitive. Then $h V[Z]$ is a prime ideal of $V[Z]$. Indeed, if $g_1g_2\in h V[Z]$ for some  $g_1,g_2\in  V[Z]$ then let us say $g_1\in h K[Z]$, that is $cg_1\in hV[Z]$  for some   $c\in V\setminus \{0\}$. It follows that $g_1\in h V[Z]$ because $h$ is primitive.
 So $V[Z]/(h)\cong V[z]$, the isomorphism being given by $Z\mapsto z$.      
\hfill\ \end{proof}

\begin{Proposition} \label{pr}  Let  $ V\subset V'$ be an   extension  of  valuation rings  with 
trivial residue field extension, $k$ the residue field of $V$, $p=\ch(k)$,  $\Gamma \subset  \Gamma'$ their value group extension, $K\subset K'$ their fraction field extension with $[K':K]=m$ and  $x\in K'\setminus K$ with $K'=K(x)$ such that $\Ir(x,K)$ is a polynomial over $V$.   Assume  that $V'$ is Henselian and $\Gamma'/\Gamma={\bf Z}\gamma\cong  {\bf Z}/m{\bf Z}$, $\gamma=\val(x)$ for some $m\not \in p{\bf Z}$. Then $V'$ is 
 ind-ci over $V$.
\end{Proposition}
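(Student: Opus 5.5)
We will prove that $V'$ is ind-ci over $V$ by presenting $V'$ as a filtered colimit of algebras of the form $V[Y]/(\Ir(x,K))$ (or localizations thereof), together with the elementary observation at the end of the proof of Lemma \ref{gen}: for any $z\in V'\setminus V$, the ring $V[z]\cong V[Z]/(h)$ is complete intersection over $V$, with $h$ primitive.

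The first step reduces to the case where the Newton polygon of $f=\Ir(x,K)=\sum_{j\le m} c_jX^j$ is as simple as possible. Since $\Gamma'/\Gamma={\bf Z}\gamma\cong {\bf Z}/m{\bf Z}$, the values $\val(c_jx^j)$ for distinct $j<m$ are pairwise distinct, exactly as in the proof of Lemma \ref{gen}. The vanishing of $f(x)$ therefore forces
\[
\val(c_0)=m\gamma=\min_j \val(c_jx^j),
\]
and all other terms are strictly larger.

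Put $u=x^m/c_0$. Then $u$ is a unit of $V'$, and its residue class lies in $k'=k$. Replacing $c_0$ by $c_0a$ for a suitable unit $a\in V$ lifting that class, we may assume $u\in 1+\mm'$.

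Now use that $V'$ is Henselian and $m\not\in p{\bf Z}$, so that $m$ is a unit in $k$. The polynomial $T^m-u$ then has a simple root $\equiv 1$ modulo $\mm'$, and this yields an element $x_1=x/v$ with $v\in 1+\mm'$ and $x_1^m=c_0$. Consequently $V'$ contains $y=x_1$ with $y^m=c_0\in V$ and $\val(y)=\gamma$. Since $[K':K]=m$ and $K(y)$ already realizes the ramification index $m$, we get $K(y)=K'$. We may thus replace $x$ by $y$, whose minimal polynomial is $Y^m-c_0$.

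The second step describes $V'$. Exactly as in the proof of Lemma \ref{gen}, every element of $V'$ is a localization of an element of the $V$-algebra $C$ generated by the $y^i/t$ with $\val(t)<i\gamma$. Moreover $V'$ is the filtered union, over finite sets of such generators, of the localizations $C'_{\mm'\cap C'}$. Each finitely generated piece is contained in some $V[y/t]$ with $\val(t)<\gamma$: given finitely many generators $y^{r}/t'$, choose $t$ with $\val(t)$ close enough to $\gamma$, which is possible after passing to the value group. If $\Gamma$ has a largest element below $\gamma$, a single $V[y/t]$ suffices. Since $y/t\in V'\setminus V$, each $V[y/t]$ is complete intersection by the primitive-polynomial argument. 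Localizations at primes are filtered colimits of principal localizations $B_g$, and each $B_g\cong B[T]/(gT-1)$ is again complete intersection. Hence $V'$ is ind-ci.

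The main obstacle is the first step. One must carefully justify that the Hensel lifting produces an element of $V'$, which is where Henselianity and $p\nmid m$ are used. One must also correctly handle the case $\val(c_0)$ versus $m\gamma$ when $\gamma$ is negative; in that case we replace $x$ by $1/x$ at the outset.
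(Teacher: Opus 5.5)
Your first step is the paper's: Hensel's lemma with $p\nmid m$ gives $y^m=c_0\in V$, $\val(y)=\gamma$ and $K(y)=K'$. The gap is in the second step.

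To have $y^r/t'\in V[y/t]$ with $\val(t)<\gamma$ you need $r\val(t)\ge \val(t')$, i.e. an element of $\Gamma$ in $[\val(t')/r,\gamma)$. In Lemma \ref{gen} such elements are supplied by hypothesis (4): $W=(V/\qq'\cap V)_{\qq\cap V}$ is not a DVR, so the relevant rank-one part of $\Gamma$ is dense below $\gamma$. You use that argument without that hypothesis. Your claim that in the discrete case a single $V[y/t]$ suffices is false.

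Take $V$ a Henselian DVR with uniformizer $\pi$ and residue characteristic $\neq 3$, and $K'=K(y)$ with $y^3=\pi^2$. All hypotheses hold with $m=3$ and $\gamma=2/3$. Since $\val(t)<2/3$ forces $t$ to be a unit, every $V[y/t]$ equals $V[y]\cong V[Y]/(Y^3-\pi^2)$, which is free over $V$ on $1,y,y^2$. Take any $s=s_0+s_1y+s_2y^2\in V[y]\setminus\mm'$, so $s_0$ is a unit. Then $s\,y^2/\pi=s_0y^2/\pi+s_1\pi+s_2\pi y\notin V[y]$. Hence $y^2/\pi\in V'$ (of value $1/3$) lies in no localization of $V[y]$. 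Your construction only yields the non-normal ring $V[y]_{\mm'\cap V[y]}\subsetneq V'$.

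What is missing is the paper's case distinction on whether $W$ is a DVR. When $W$ is a DVR, the paper re-chooses the generator so that $x$ is a local parameter of $W'=(V'/\qq')_{\qq}$ and $y=x^m$ is one of $W$. In the example this means $z=y^2/\pi$, with $z^3=\pi$, so $V'=V[z]\cong V[Z]/(Z^3-\pi)$.

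With this choice, take a generator $x^r/t$ with $\val(t)<r\val(x)$. Then $t\notin\qq'$, and after cancelling powers of $y=x^m$ one may take $t\notin\qq$. Thus $\val(t)<\val(x)$ and $x^r/t=x^{r-1}(x/t)\in V[x/t]$. So $C$ is the filtered union of the complete intersections $V[x/t]$, $t\in V\setminus\qq$.

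Your approximation from below (the argument of Lemma \ref{gen}) is valid only in the remaining case, where $W$ is not a DVR. Separately, your concern about $\gamma<0$ is moot: $\Ir(x,K)$ is monic over $V$, so $x$ is integral over $V$ and already lies in $V'$.
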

\begin{proof} We have $m\gamma\in \Gamma$ and so $m\val(x)=\val(y) $ for some $y\in V$. Then $x^m=yu$ for some unit $u\in V'$. Take an unit element $t\in V$ which induces the same element with $u$ in $k$. Changing $y,u$ by $yt,u/t$ we may assume that $\val(u-1)>0$.  Then the equation $Z^m-u=0$ has $1$ as a solution  modulo the maximal ideal $\mm'$ of $V'$ and so it has one $z$ in $V'$ using the Henselian property ($m\not \in p{\bf Z}$).
 Dividing $x$ by $z$ we may suppose that $u=1$. Set $V''=V'\cap K(x)$ and let $\Gamma''$ be the value group of $V''$. Then $\Gamma''=\Gamma'$,  Irr$(x,K)=Y^m-y$ and  $[K'':K]=m$. Let  $\qq$ be the minimal prime over ideal of $xV'$. Let $\qq'\in \Spec V'$ be the prime ideal corresponding to the maximal ideal of the fraction ring of $V'$ with respect to the multiplicative system  generated by $x$. By the above lemma we see that $V''$ is a injectively ind-ci over $V$ when $W=(V/\qq'\cap V)_{\qq\cap V}$ is not a DVR. 
 
 Now suppose that $W$ is a DVR. Then $W'=(V''/\qq'\cap V'')_{\qq\cap V''}$ is a DVR too by 
 \cite[Ch. VI, Corollary 3 of (8.1)]{Bou}. In this case we can assume that $x$ defines a local parameter in $W'$ and $y$ defines a local parameter in $W$.
 As in the proof of Lemma \ref{gen} $V''= C_{\mm'\cap C}$. Let $x^r/t$ be with $t\in V$ and $\val(t)<\val(x^r)$. Thus $x^r\in tV'$ and so $t\not \in \qq'$. If $t\in xV'$ then $t\in yV$ and we may simplify with  $y=x^m$ several times such that finally $t\not \in yV$ and so $t\not \in xV'$. Hence $t\not \in \qq$ and $\val(t)<\val(x)$. It follows that $x^r/t=x^{r-1}(x/t)\in V[x/t]$. Taking ${\mathcal T}^-= \val(\mm\setminus (\qq\cap V))$, $\mm$ being the maximal ideal of $V$, we see as in Lemma \ref{gen} that $C$ is a filtered union   of some $V[x/t]$, $t\not \in \qq\cap V$, which are complete intersection $V$-subalgebras  of $V''$.
 
 Therefore, in both cases $V''$ is  injectively ind-ci over  $V$.
 But the extension $V''\subset V'$ is immediate and it is enough to apply Theorem \ref{T2}.
\hfill\ \end{proof}

\begin{Theorem}\label{t8}   Let  $ V\subset V'$ be an   extension  of  valuation rings  with 
trivial residue field extension, $k$ the residue field of $V$, $p=\ch(k)$,  $\Gamma \subset  \Gamma'$ their value group extension, and $K\subset K'$ their fraction field extension.  Assume  that $V'$ is Henselian and $\Gamma'/\Gamma$ is a finitely generated torsion group but  has no $p$-torsion, if $p>0$. Then $V'$ is 
  ind-ci over $V$.
\end{Theorem}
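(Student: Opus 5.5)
The plan is to reduce to Proposition \ref{pr} by induction on the order of the finite group $\Gamma'/\Gamma$, building a tower of valuation rings in which each step adjoins an $m$-th root with $m$ prime to $p$. Then I would close the tower with one immediate extension, handled by Theorem \ref{T2}. Throughout I use two facts. First, a filtered colimit of ind-ci algebras over an ind-ci algebra is again ind-ci. Second, if $A$ is ind-ci over $V$ and $B$ is ind-ci over $A$, then $B$ is ind-ci over $V$: write $A=\dirlim A_i$ and $B=\dirlim B_j$; each $B_j$ is finitely presented over $A$, hence descends to a complete intersection over some $A_i$, which is complete intersection over $V$. The regularity of the lifted system needs a small argument, for instance flatness of complete intersection algebras over valuation rings, or the fact that regularity of $P$ can be tested after a suitable choice of index.

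\textbf{Step 1: the tower.} Since $\Gamma'/\Gamma$ is finitely generated torsion with no $p$-torsion, it is finite of order prime to $p$. Choose a chain $\Gamma=\Gamma_0\subset\Gamma_1\subset\dots\subset\Gamma_n=\Gamma'$ with each $\Gamma_{i}/\Gamma_{i-1}$ cyclic of order $m_i$, $p\nmid m_i$. Pick $\gamma_i\in\Gamma_i$ generating $\Gamma_i/\Gamma_{i-1}$, with $\gamma_i\geq 0$, and pick $x_i\in V'$ with $\val(x_i)=\gamma_i$. Then $m_i\gamma_i=\val(y_i)$ for some $y_i$ in the previous stage.

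Arguing as in the proof of Proposition \ref{pr}, using that $V'$ is Henselian, the residue fields coincide, and $p\nmid m_i$, I normalise $x_i$ so that $x_i^{m_i}=y_i$ exactly. Inductively I take $y_i\in K(x_1,\dots,x_{i-1})$; after multiplying $x_i$ by a suitable element I may assume $y_i$ lies in the valuation ring of that field.

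Set $K_i=K(x_1,\dots,x_i)$ and $V_i=V'\cap K_i$. The polynomial $Y^{m_i}-y_i$ has the root $x_i$. Its degree equals $[K_i:K_{i-1}]$, because the value group of $V_i$ contains $\Gamma_i$ and the fundamental inequality $e\le[K_i:K_{i-1}]\le m_i$ forces equality. So $\Ir(x_i,K_{i-1})=Y^{m_i}-y_i$ is a polynomial over $V_{i-1}$. The value group of $V_i$ is exactly $\Gamma_{i-1}+{\bf Z}\gamma_i=\Gamma_i$, and the residue field is still $k$.

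\textbf{Step 2: Henselizations.} Proposition \ref{pr} needs the top ring to be Henselian, but $V_i$ need not be. I would therefore replace $V_i$ by its Henselization $V_i^h$ inside $V'$, which is possible since $V'$ is Henselian. I would also replace $V_{i-1}$ by $V_{i-1}^h$, and $K_i$ by the fraction field of $V_i^h$, which equals $K_{i-1}^h(x_i)$.

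The Henselization is an immediate, ind-\'etale extension, so $V_{i-1}^h$ is ind-ci over $V_{i-1}$. The degree count still works over the Henselization: value groups and residue fields are unchanged, so the fundamental inequality again gives $[K^h_{i-1}(x_i):K^h_{i-1}]=m_i$. Proposition \ref{pr}, applied to $V^h_{i-1}\subset V^h_i$, shows that each step $V_{i-1}^h\subset V_i^h$ is ind-ci. Composing the steps, $V_n^h$ is ind-ci over $V$.

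\textbf{Step 3: the top.} The extension $V_n^h\subset V'$ has trivial residue field extension and value group $\Gamma_n=\Gamma'$, so it is immediate. By Theorem \ref{T2} it is injectively ind-ci, and transitivity gives that $V'$ is ind-ci over $V$.

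The main obstacle is the transitivity/composition of ind-ci extensions, specifically checking that the descended systems remain regular. I would try to settle it by noting that over a valuation ring a finitely presented algebra $C/(P)$ that is flat with fibres complete intersections of the right dimension is a complete intersection. Flatness and the fibre condition descend to a large enough index $A_i$ by standard limit arguments.
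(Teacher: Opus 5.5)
Your proposal is correct and follows essentially the paper's route. You split the finite, prime-to-$p$ group $\Gamma'/\Gamma$ into cyclic steps, normalize $x_i^{m_i}=y_i$ using the Henselian property of $V'$ as in Proposition~\ref{pr}, apply Proposition~\ref{pr} at each step, handle the immediate top with Theorem~\ref{T2}, and compose; for the transitivity you flag, the paper simply cites [P2, Lemma 6]. Your detour through Henselizations is harmless but unnecessary: the paper applies Proposition~\ref{pr} directly to the non-Henselian ring $V'\cap K(x)$, because in that proof the Henselian property of $V'$ is used only for the root normalization, which you already carry out in Step 1.
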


\begin{proof} 
Let $\gamma\in \Gamma'\setminus \Gamma$ be such that  $m\gamma\in \Gamma$ for some $m\in {\bf N}$,  which is certainly not in $p{\bf Z}$. We may choose a minimal such $m$.  We may find $x\in K'$ such that $\val(x)=\gamma$. As in Proposition \ref{pr}  $x$  could be taken to be a root of a polynomial over $K$ and so $x$ is algebraic over $K$. 

Moreover,
we may assume that  $g=\Ir(x,K)$ is a polynomial over $V$. 
Indeed, multiplying $g$ by some element of $V$ we get an equality $\sum_{i=0}^m a_ix^i=0 $ for some $a_i\in V$. Then $\sum_{i=0}^m a_m^{i-1}a_{m-i}(a_mx)^{m-i}=0$ and so $x'=a_mx$ is a root of the irreducible monic polynomial $g'=\sum_{i=0}^m a_m^{i-1}a_{m-i}X^{m-i}\in V[X]$. Note that $x$ and $x'$ define by value the same element in $\Gamma'/\Gamma$ and so for $\gamma'=\val(x')$ we get still $\Gamma'/\Gamma\iso {\bf Z}\gamma'$

As in Proposition \ref{pr} we may change $x$ such that   $y=x^m\in K$. 
Then the value group of $V''=V'\cap K(x)$ is ${\bf Z}\gamma\cong {\bf Z}/m{\bf Z}$. 
Thus $V''$ is  ind-ci over $V$ by Proposition \ref{pr}. 

Changing $V$ by $ V''$ we see that the torsion part of $\Gamma'/\Gamma$ becomes smaller. As $\Gamma'/\Gamma$ is finitely generated we reduce step by step to the case when $\Gamma'=\Gamma$. 

Therefore,  the extension $V\subset V'$ is immediate and we may apply Theorem \ref{T2}, which is enough. Note that a complete intersection algebra  over a complete intersection $V$-algebra is still a complete intersection $V$-algebra  by \cite[Lemma 6]{P2}
\hfill\ \end{proof}

The following result follows immediately from the above theorem.

\begin{Corollary} \label{cor}
 Let  $ V\subset V'$ be an   extension  of  valuation rings  with 
trivial residue field extension,  $k$ the residue field of $V$ and  $\Gamma\subset \Gamma'$ their value group extension.   
 Assume $k\supset {\bf Q}$,  $V'$ is Henselian and  $\Gamma'/\Gamma$ is  a finitely generated torsion group.  Then $V'$ is  
  ind-ci over  $V$.
\end{Corollary}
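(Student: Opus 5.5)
The plan is to deduce Corollary \ref{cor} directly from Theorem \ref{t8}, by checking that its hypotheses hold. Theorem \ref{t8} needs:
\begin{enumerate}
\item an extension $V\subset V'$ of valuation rings with trivial residue field extension;
\item $V'$ Henselian;
\item $\Gamma'/\Gamma$ a finitely generated torsion group with no $p$-torsion when $p=\ch(k)>0$.
\end{enumerate}
The first two conditions, and the finite generation and torsion part of the third, are assumed verbatim in the Corollary. So only the $p$-torsion condition needs attention.

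Since $k\supset {\bf Q}$, we have $p=\ch(k)=0$. The no-$p$-torsion clause in Theorem \ref{t8} is imposed only when $p>0$, so it is vacuous here. Put differently, the requirement used inside the proof of Theorem \ref{t8} is $m\not\in p{\bf Z}$ for the orders $m$ of elements of $\Gamma'/\Gamma$. With $p=0$ this reads $m\neq 0$, which holds automatically since $m\in{\bf N}$ is positive.

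This same fact is what makes the Henselian step of Proposition \ref{pr} work. That step solves $Z^m-u=0$ with $u\equiv 1$ modulo $\mm'$. The derivative $mZ^{m-1}$ is a unit at $Z=1$ because $m$ is invertible in $k\supset{\bf Q}$.

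Applying Theorem \ref{t8} then gives that $V'$ is ind-ci over $V$. I expect no real obstacle. The one point worth stating explicitly is the convention, recalled in the introduction, that $p=\ch(k)$ may be $0$. With that convention the hypotheses of Theorem \ref{t8} are read correctly in residue characteristic zero, and the reduction steps of its proof go through unchanged: the induction on the torsion of $\Gamma'/\Gamma$, followed by Theorem \ref{T2} for the final immediate extension.
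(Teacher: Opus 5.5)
Your proposal is correct and is exactly the paper's argument: the Corollary is said to follow immediately from Theorem~\ref{t8}, because $k\supset{\bf Q}$ forces $p=\ch(k)=0$, which makes the no-$p$-torsion hypothesis vacuous. Your remark about the Henselian step in Proposition~\ref{pr} (that $m$ is invertible in $k$) is a harmless sanity check, not an additional ingredient the proof needs.
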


\begin{Remark} \label{r}  If $V'$ is Henselian, $\dim V'=\dim V=1$, $k\supset {\bf Q}$, $\Gamma'$ is finitely generated  and  the extension $\Gamma\subset \Gamma'$ is not trivial then it is known that $V'$ might  not  be a filtered direct limit of smooth $V$-algebras (see \cite[Theorem 2]{P}). The above corollary says that in this case $V'$ is   ind-ci over $V$.
\end{Remark}  
\vskip 0.3 cm

\section{Extensions of valuation rings with trivial residue field extensions}

\begin{Lemma} \label{g} 
Let $V\subset V'$ be an extension of valuation rings with  trivial residue field extension,  $K\subset K'$ their fraction fields and $\Gamma\subset \Gamma'$ their value group extension. 
Let $x\in V'$, $\qq\in \Spec V'$ correspond to  the minimal prime ideal of $V'/xV'$. Let $\qq'\in \Spec V'$  correspond to the maximal ideal of $V'[1/x]$.
Assume  that 
\begin{enumerate}
\item
$W=(V/\qq'\cap V)_{\qq\cap V}$ is not a DVR,

\item $K'=K(x)$ and 

\item $\Gamma'/\Gamma={\bf Z}\gamma\cong  {\bf Z}$, the isomorphism being given by $\gamma=\val(x)\mapsto 1$. 
\end{enumerate}
 Then $V'$ is  injectively ind-ci
over $V$.
\end{Lemma}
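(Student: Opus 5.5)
The approach mirrors the proof of Lemma \ref{gen}, with the algebraic element replaced by the transcendental element $x$. By (3), $m\gamma\notin\Gamma$ for every $m\neq 0$, so $x$ is transcendental over $K$; otherwise some relation $\sum a_ix^i=0$ would force two terms $a_ix^i$, $a_jx^j$ with $i\neq j$ to have equal value, giving $(j-i)\gamma\in\Gamma$. Hence $K'=K(x)$ is purely transcendental.

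\emph{Step 1: description of $V'$.} For a nonzero polynomial $f=\sum a_iX^i\in V[X]$, the values $\val(a_ix^i)$ are pairwise distinct. So $\val(f(x))=\min_i\val(a_ix^i)$, and $f(x)\in Va_ix^i(1+\mm'\cap V[x])$ for the index $i$ attaining the minimum. The same holds with $x^{-1}$ in place of $x$, using Laurent polynomials. Every element of $K'$ is a quotient $f(x)/g(x)$. Since the residue field extension is trivial and each of $f(x)$, $g(x)$ is a monomial $a x^i$ times a unit of $V[x]_{\mm'\cap V[x]}$, every element of $V'$ is a monomial $ax^i$ (with $a\in K$, $i\in{\bf Z}$, $\val(a)+i\gamma\ge 0$) times a unit of a local ring of the form $C_{\mm'\cap C}$. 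Here $C$ is the $V$-subalgebra of $V'$ generated by all monomials $x^i/t$ and $t/x^i$ ($i\ge 0$, $t\in V\setminus\{0\}$) that lie in $V'$. Thus $V'=C_{\mm'\cap C}$, and it suffices to show that $C$ is a filtered union of complete intersection $V$-subalgebras. Localizations are then absorbed as in the paper: a localization is a colimit of principal localizations $D[1/s]\cong D[T]/(sT-1)$, which are again complete intersections.

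\emph{Step 2: cofinal sequences.} As in Lemma \ref{gen}, $\qq'$ is the largest prime not containing $x$, so $\height(\qq/\qq')=1$. Since $x$ is transcendental, this time I would argue that $\qq\cap V\neq\qq'\cap V$ directly from the description of $V'$ via monomials: if the two contractions coincided, no element of $V$ would have value between the two jumps, contradicting $V'=C_{\mm'\cap C}$ with monomial generators. Passing to $W'=(V'/\qq')_{\qq}$, whose value group embeds in ${\bf R}$, the value group of $W$ is dense because of (1). Choose $t_s,u_s\in V$ with $\val(t_s)\nearrow\gamma$ and $\val(u_s)\searrow\gamma$ in ${\bf R}$; density makes both sequences strictly monotone and convergent to the real image of $\gamma$. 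The same trick as in Lemma \ref{gen} shows $C=\bigcup_s B_s$ with
\[
B_s=V[x/t_s,\,u_s/x].
\]
Indeed, $x^r/t'=(x/t_s)^r(t_s^r/t')$ whenever $r\val(t_s)\ge\val(t')$, and symmetrically $t'/x^r=(u_s/x)^r(t'/u_s^r)$. The monomials not captured in this way, namely those whose values lie outside the component $\qq/\qq'$, are handled because $t$ or its inverse scaling then lies in $V$. This requires some care, but it is routine bookkeeping with the $\qq,\qq'$ layers.

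\emph{Step 3: the main obstacle.} It remains to show that $B_s$ is a complete intersection. Put $Y=x/t_s$ and $Z=u_s/x$; then $YZ=u_s/t_s=:c\in V$, with $\val(c)>0$. I would prove $B_s\cong V[Y,Z]/(YZ-c)$. The element $YZ-c$ is regular because it is nonzero in a domain. The key point is injectivity of the map: an element $\sum b_{ij}Y^iZ^j$ can be reduced modulo $YZ-c$ to the form $\sum_i b_iY^i+\sum_j b'_jZ^j$, which maps to a Laurent polynomial in $x$ with distinct monomials. Because $x$ is transcendental, such a polynomial vanishes only if all coefficients vanish. Thus $V[Y,Z]/(YZ-c)$ is a domain isomorphic to $B_s$. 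This step is short once $B_s$ has this shape; the delicate part I anticipate is Step 2, namely ensuring that the two-sided sequences really exhaust $C$ given only hypothesis (1). Finally, $B_s\subset B_{s'}$ for $s<s'$ by monotonicity, which gives the required filtered union.
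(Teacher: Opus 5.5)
Your overall strategy matches the paper's: describe $V'$ as a localization of the algebra generated by monomials, then exhaust it by algebras $V[x/a,b/x]\cong V[Y,Z]/(YZ-b/a)$. Step 3 is fine. Step 2, however, has genuine gaps. The main one is the claim that $\qq\cap V\neq\qq'\cap V$. It is false, and it would exclude a case that hypothesis (1) explicitly allows: $W$ may be a field. The paper's proof opens by splitting into ``$W$ is a field'' and ``$W$ is one-dimensional and non-Noetherian''.

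In Lemma \ref{gen} the inequality came from algebraicity ($V'$ is a localization of the integral closure of $V$). For transcendental $x$ nothing forces $\Gamma$ to have values strictly between the two layers, and this does not contradict $V'=C_{\mm'\cap C}$. For example, let $V$ be a DVR with uniformizer $\pi$ and $V'$ the valuation ring of $K(x)$ with $\val(\sum a_ix^i)=\min_i(\val(a_i)+i\gamma)$, where $\Gamma'={\bf Z}\val(\pi)\oplus{\bf Z}\gamma$ is ordered so that $0<n\gamma<\val(\pi)$ for all $n\geq 1$. Then $\qq=\mm'$, $\qq\cap V=\qq'\cap V=\pi V$ and $W=k$.

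Here your $t_s,u_s$ do not exist, and the shape $V[x/t,u/x]$ is genuinely insufficient. Every such algebra with $\val(t)<\gamma<\val(u)$ lies in $V[x,\pi/x]$, yet $\pi/x^2\in V'$ is not in its localization. Indeed, a denominator of value $0$ has a unit constant term, so the numerator would have $x^{-2}$-coefficient of value $\val(\pi)$. But elements of $V[x,\pi/x]$ have $x^{-2}$-coefficient of value $\geq 2\val(\pi)$. In the field case you need $V[x/t,u/x^n]\cong V[Y,Z]/(Y^nZ-u/t^n)$ with $t\in V\setminus\qq$, $u\in\qq'\cap V$ and $n$ unbounded (only $V[x/t]$ if $\qq'\cap V=0$). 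Your Step 3 argument shows these are complete intersections, and they exhaust $C$.

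The ``routine bookkeeping'' in Step 2 also breaks in the dense case. Limits in ${\bf R}$ forget the convex subgroup $H\subset\Gamma'$ of values of units of $V'_{\qq}$. This subgroup maps to $0$ in ${\bf R}$ but need not lie in $\Gamma$. So some $t\in V$ can have $\val(t)<\gamma$ while $\val(t)$ and $\gamma$ have the same image in ${\bf R}$.

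Take $\Gamma'={\bf Z}^3$ ordered lexicographically by $(a+b\sqrt2,c)$, $\Gamma=\{(a,b,-a)\}$, $\gamma=(1,0,0)$, $\val(t)=(1,0,-1)$, with $V'$ built as before. Then $W=V$ has value group ${\bf Z}+{\bf Z}\sqrt2$, so (1) holds, and $x/t\in V'$. But every term of your strictly increasing real sequence has $\val(t_s)<\val(t)$. Comparing $x$-coefficients as above then shows $x/t$ is not in the localization of $\bigcup_sB_s$.

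For $r=1$ this is repaired by indexing over the ordered sets $\Gamma_{<\gamma}$ and $\Gamma_{>\gamma}$ of $\Gamma'$ itself, as the paper does, rather than over real limits. Boundary monomials $x^r/t$ with $r\geq 2$ still need a separate check: there one needs some $\tau\in\Gamma$ with $\tau<\gamma$ and $r\tau\geq\val(t)$, and such $\tau$ need not exist. So the substance of the lemma lies in Step 2, which as written does not go through.
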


\begin{proof} By (i) $W$ is either a field, or a one dimensional valuation ring which is not Noetherian. Note that $x$ is transcendental over $K$ because $\val(x)$ induces an element without torsion in $\Gamma'/\Gamma$ (see \cite[Theorem 1, in VI (10.3)]{Bou}). Also note that ${\bf N}\gamma \cap \Gamma=0$.

 Let $\Gamma_{<\gamma}=\{\tau\in \Gamma: 0< \tau<\gamma\}$ and  $\Gamma_{>\gamma}=\{\tau\in \Gamma: \tau> \gamma\}$. 
Choose $a_{\tau}\in V$ such that $\val(a_{\tau})=\tau$ for $\tau\in \Gamma$. Set $y_{\tau}=x/a_{\tau}\in V'$ for $\tau\in \Gamma_{<\gamma}$, $z_{\nu}=a_{\nu}/x$ for $\nu\in \Gamma_{>\gamma}$  and $B_{\tau,\nu}=V[y_{\tau},z_{\nu}]\cong V[Y_{\tau},Z_{\nu}]/(Y_{\tau}Z_{\nu}-a_{\nu}/a_{\tau})$. If $\tau<\tau'\in \Gamma_{<\gamma}$ then $a_{\tau'}=a_{\tau}c$ for some $c\in V$ and $y_{\tau}=y_{\tau'}c.$ If $\nu'<\nu$ in    $\Gamma_{>\gamma}$ then $a_{\nu}=a_{\nu'}t$ for some $t\in V$ and 
$z_{\nu}=z_{\nu'}t.$ Thus $B_{\tau,\nu}\subset B_{\tau',\nu'}$ if  $\tau<\tau' $ and $\nu'<\nu$ and so $A=\cup_{\tau,\nu}B_{\tau,\nu}$ is a filtered union of complete intersection $V$-subalgebras when $\tau$ is increasing in  $\Gamma_{<\gamma}$ and $\nu$ is decreasing in $\Gamma_{>\gamma}$. If $\Gamma_{>\gamma}=\emptyset$ then we set $A=\cup_{\tau} B_{\tau}$ for $B_{\tau}=V[y_{\tau}]$. If $\Gamma_{<\gamma}=\emptyset$ then we set $A=\cup_{\nu}B_{\nu}$ for $B_{\nu}=V[z_{\nu}]$.
It is enough to show that $V'$ is a localization of $A$.

Fix $f=\sum_{i=0}^r c_iX^i\in V[X]$ for some $c_i\in V$. Then the nonzero elements $c_ix^i$ have different values because $\gamma$ has no torsion in $\Gamma'/\Gamma$.
Indeed, if $\val(c_ix^i)=\val(c_jx^j)$ for $i>j$  then $(i-j)\gamma=\val(c_j)-\val(c_i)\in \Gamma$, which is false.
 Assume  $\val(f(x))=\val(c_sx^s)$ for some $s$, and set $d=c_sx^s\in V[x]$. We have $f(x)=dv$ for the unit $v=1+\sum_{i=1, i\not =s}^r (c_ix^i/c_sx^s)  $ in $V'$. We see that $(c_i/c_s)x^{i-s}\in \mm'$ for $i\not =s$, $\mm'$ being the maximal ideal of $V'$.
Moreover  all  $(c_i/c_s)x^{i-s}$ belongs to  $A$. Indeed, if $i>s$ then from $\val((c_i/c_s)x^{i-s})>0$ we get $(i-s)\val(x)>-\val(c_i/c_s)$. When $c_s/c_i\in q'\cap V$ then clearly we get $\val(x)>\val(c_s/c_i)$. If $c_s/c_i\not \in q'\cap V$
then thinking the value group of $W$ contained in $\bf R$  
we get $\val(x)>\val(c_s/c_i)/(i-s)\in {\bf R}$.
Since $W$ is not a DVR, we may  choose $\tau$ in $\Gamma_{<\gamma}$ with $\tau\geq -(\val(c_i/c_s)$ respectively $ \tau \geq -(\val(c_i/c_s)/(i-s))$. In both cases it follows that $(c_i/c_s)x^{i-s}=((c_i/c_s)a_{\tau}^{i-s})(x/a_{\tau})^{i-s}\in V[x/a_{\tau}]\subset B_{\tau,\nu}$ for any $\nu$.  Certainly if $W$ is a field  (that is $\qq\cap V=\qq'\cap V$) then we may choose even $\tau=-\val(c_i/c_s)$. 

 If $i<s$ then we get $\val(c_i/c_s)>(s-i)\val(x)$. As above we  choose $\nu$ in $\Gamma_{>\gamma}$ with  $ \nu \leq \val(c_i/c_s)$ if $c_i/c_s\not \in q\cap V$, respectively 
  $ \nu \leq (\val(c_i/c_s)/(i-s))\in {\bf R}$ if $c_i/c_s\in q\cap V$.
 It follows that  $(c_i/c_s)x^{i-s}=((c_i/c_s)/a_{\nu}^{s-i})(a_{\nu}/x)^{s-i}\in V[(a_{\nu}/x)]
\subset B_{\tau,\nu}$ for any  $\tau$. 

Therefore $v$ is a unit in $A_{\mm'\cap A}$.  If $g_1/g_2$ is a rational function from $V'$, $g_i\in V[x]$ then there exists $d_i,v_i$, $i=1,2$ with $d_i\in V'$ and $v_i$ units in $A_{\mm'\cap A}$ such that \\
$g_i=d_iv_i$. As above $d_1/d_2\in A$. Therefore, $g_1/g_2=(d_1/d_2)v_1v_2^{-1}\in A_{\mm'\cap A}$  and so $A_{\mm'\cap A}=V'$.
\hfill\ \end{proof}

We need the following lemma (see \cite[2.2]{Ell}, or \cite[4.6.1]{Po1}, or \cite[6.1.30]{GR}).

\begin{Lemma} \label{EPGR-lem}
For a totally ordered abelian group $\Gamma$, the submonoid $\Gamma_{\geq 0} \subset \Gamma$ of nonnegative elements is a filtered  union of its finite free submonoids isomorphic to ${\bf Z}_{\geq 0}^r$,  where $r \in { \bf Z}_{\geq 0}$ need not be constant. 
\end{Lemma}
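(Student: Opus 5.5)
This is Elliott's lemma: for a totally ordered abelian group $\Gamma$, the monoid $\Gamma_{\geq 0}$ is a filtered union of finite free submonoids. The plan is to reduce to finitely generated subgroups and then to finite subsets. Since $\Gamma$ is torsion free, every finitely generated subgroup $G\subset\Gamma$ is free of finite rank $n$, and $\Gamma_{\geq 0}=\bigcup_G G_{\geq 0}$ is a filtered union. So it suffices to prove the following claim.

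\emph{Claim.} For every finitely generated subgroup $G$ and every finite subset $S\subset G_{\geq 0}$, there is a free submonoid $M\cong{\bf Z}_{\geq 0}^r$ with $S\subset M\subset G_{\geq 0}$.

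Granting the claim, we get filteredness directly. Given two finite free submonoids $M_1,M_2\subset \Gamma_{\geq 0}$, their generators lie in some common finitely generated $G$. Applying the claim to the union of their bases gives a free $M\subset\Gamma_{\geq 0}$ containing $M_1\cup M_2$. Hence the finite free submonoids form a directed family whose union is $\Gamma_{\geq 0}$.

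To prove the claim, identify $G\cong{\bf Z}^n$. Through the order-preserving embedding given by Hahn's theorem, or by a lexicographic refinement, the positive cone of $G$ is cut out by an ordering of ${\bf Z}^n$. The key step is to approximate this ordering on the finite set $S$ by a rational polyhedral cone. One then wants a unimodular cone $\sigma\subset{\bf R}^n$, spanned by part of a ${\bf Z}$-basis $e_1,\dots,e_r$ of a saturated sublattice, such that $S\subset\sigma$ and $\sigma\cap G\subset G_{\geq 0}$. In that case $M={\bf Z}_{\geq 0}e_1+\dots+{\bf Z}_{\geq 0}e_r$ works, provided the $e_i$ are chosen positive and $S\subset M$; this last condition holds automatically because $\sigma$ is unimodular.

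To construct $\sigma$, I would induct on the rank $n$ of $G$. For $n=1$ the claim is trivial, with $M={\bf Z}_{\geq 0}g$ where $g$ is the positive generator. In general, let $H\subset G$ be the largest convex (isolated) proper subgroup, so that $G/H$ embeds in ${\bf R}$ via an order homomorphism $\phi$.

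In the archimedean case, where $G\subset{\bf R}$, the elements of $S$ are positive reals. Choose a rational simplicial cone in ${\bf R}^n$ containing $S$ and contained in the open half-space $\{\phi>0\}$ (plus $0$). This is possible because $S\setminus\{0\}$ lies in the open half-space and $\phi$ is linear. Then refine the cone to a unimodular fan, as in toric resolution of singularities. Since $\phi$ is positive on the whole cone, each unimodular subcone containing a given point lies in $G_{\geq 0}$. The catch is that $S$ need not lie in a single unimodular cone. I would handle this by choosing the initial simplicial cone so that $S$ spans it, and then using the standard fact for positive real linear forms: the Jacobi--Perron style continued fraction algorithm keeps performing steps $e_i\mapsto e_i-e_j$ whenever $\phi(e_i)>\phi(e_j)$, and since $S$ is finite this eventually produces a basis whose nonnegative span contains $S$, staying inside the positive cone.

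In the general case, I would combine this with induction. Apply the archimedean step to the images of $S$ in $G/H$. Lift the resulting basis elements to positive elements of $G$. Their lifts are positive automatically when $\phi>0$, since $H$ is convex. Then correct the $H$-components of $S$ by applying the induction hypothesis to the finitely many resulting elements of $H_{\geq 0}$.

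The main obstacle is this last step: the differences between elements of $S$ and nonnegative combinations of the lifted basis may be negative in $H$. Fixing this requires adjusting the lifts, by subtracting elements of $H$ from them, which keeps them positive. This uses the freedom that all lifts with positive $\phi$-value are positive. I expect this bookkeeping, ensuring that the combined system stays linearly independent and generates a monoid containing $S$, to be the delicate part. Alternatively, since the lemma is quoted from \cite[2.2]{Ell}, \cite[4.6.1]{Po1} and \cite[6.1.30]{GR}, one may simply invoke those references.
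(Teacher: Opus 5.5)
The paper gives no argument for this lemma. It only points to \cite[2.2]{Ell}, \cite[4.6.1]{Po1}, \cite[6.1.30]{GR}, so your closing fallback is exactly what the paper does. As a proof, the outer layers of your sketch are fine: the reduction to a finite $S\subset G_{\geq 0}$ in a finitely generated $G$, the filteredness argument, and the induction through the largest proper convex subgroup $H$. The step you flag as delicate is actually routine. Write $s=\sum_i c_{s,i}e_i+h_s$ with $h_s\in H$, and replace every lift $e_i$ by $e_i-g$, where $g=\sum_{s\in S}|h_s|\in H_{\geq 0}$. Then $e_i-g>0$ by convexity of $H$. The new $H$-component $h_s+(\sum_i c_{s,i})g$ is $\geq 0$ for $s\notin H$, because some $c_{s,i}\geq 1$. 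Now apply induction on rank to these elements together with $S\cap H$. The combined family is ${\bf Z}$-independent, as you see by reducing modulo $H$.

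The genuine gap is the archimedean case, which carries all the content. You assert that performing moves $e_i\mapsto e_i-e_j$ whenever $\phi(e_i)>\phi(e_j)$ eventually puts $S$ in the cone ``since $S$ is finite''. Finiteness is irrelevant here. Each move enlarges the cone, and what you need is that the union of these cones is the whole half-space $\{\phi>0\}$. With arbitrary choices of moves this fails. For $n=3$, suppose you only ever move between $e_1$ and $e_2$, which is always allowed since $\phi(e_1)\neq\phi(e_2)$. Then the $e_3$-coordinate of every lattice vector never changes, so the positive element $Ne_1-e_3$ with $N\gg 0$ is never captured. For a specific rule (Brun, Jacobi--Perron) the claim is equivalent to the convergence of that multidimensional continued fraction algorithm. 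That is a nontrivial theorem, and you neither prove nor cite it.

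The toric idea you abandoned does work after dualizing:
\begin{enumerate}
\item Replace $S$ by $S\setminus\{0\}$ enlarged by a ${\bf Z}$-basis of $G$ consisting of positive elements. Then $C=\{\psi\in\Hom(G,{\bf R}):\psi(S)\geq 0\}$ is a strongly convex rational polyhedral cone with $\phi$ in its interior.
\item Take a unimodular subdivision of $C$, and a cone of it containing $\phi$, spanned by a basis $f_1,\dots,f_n$ of $\Hom(G,{\bf Z})$.
\item Let $e_1,\dots,e_n$ be the dual basis of $G$. Then $\phi=\sum_i\phi(e_i)f_i$, so $\phi(e_i)\geq 0$, hence $\phi(e_i)>0$ because $\phi$ is injective on $G$.
\item Each $s\in S$ equals $\sum_i f_i(s)e_i$ with $f_i(s)\in{\bf Z}_{\geq 0}$, so $S$ lies in the free monoid spanned by the positive, independent $e_i$.
\end{enumerate}
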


\begin{Proposition} \label{p9} 
Let $V\subset V'$ be an extension of valuation rings with  trivial residue field extension, $\mm,\mm'$ their maximal ideals, $K\subset K'$ their fraction fields and $\Gamma\subset \Gamma'$ their value group extension. 
Let $x\in V'$, $\qq\in \Spec V'$ correspond to  the minimal prime ideal of $V'/xV'$. Let $\qq'\in \Spec V'$  correspond to the maximal ideal of $V'[1/x]$.
Assume  that   $W=(V/\qq'\cap V)_{\qq\cap V}$ is  a DVR, $K'=K(x)$ and $\Gamma'/\Gamma={\bf Z}\gamma\cong  {\bf Z}$, the isomorphism being given by $\gamma=\val(x)\mapsto 1$.  Then $V'$ is  injectively ind-ci 
over $V$.
\end{Proposition}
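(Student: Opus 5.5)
We follow the strategy of Lemma \ref{g}: exhibit $V'$ as a localization $A_{\mm'\cap A}$ of a filtered union $A$ of complete intersection $V$-subalgebras of $V'$. Two facts carry over unchanged. First, $x$ is transcendental over $K$. Second, for $f=\sum c_iX^i\in V[X]$ the nonzero monomials $c_ix^i$ have pairwise distinct values, so $f(x)=c_sx^s v$ with $v=1+\sum_{i\neq s}(c_i/c_s)x^{i-s}$ and every $(c_i/c_s)x^{i-s}\in \mm'$. Hence it suffices to build $A$ containing every element of $V'$ of the form $(c/c')x^{j}$, $j\in{\bf Z}$. Then $A_{\mm'\cap A}$ contains every ratio $g_1/g_2\in V'$.

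The difference from Lemma \ref{g} is that, $W$ being a DVR, we cannot approach $\gamma$ by elements of $\Gamma$ using a single generator $x/a_\tau$. We pass to value monoids. Let $\Delta\subset\Gamma'$ be the subgroup $\Gamma+{\bf Z}\gamma\cong\Gamma\oplus{\bf Z}$; by hypothesis this is all of $\Gamma'$. Let $\Delta_{\geq0}$ be its nonnegative part. Because the $c_ix^i$ have distinct values, an element $ax^j$ ($a\in K$, $j\in{\bf Z}$) lies in $V'$ exactly when $\val(a)+j\gamma\geq 0$. Write $S=\{ax^j\in V'\}$. The map $\val$ identifies $S/V^*$ with $\Delta_{\geq 0}$, and $V^*$ is the unit group of $V$.

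By Lemma \ref{EPGR-lem}, $\Delta_{\geq0}$ is a filtered union of free submonoids $M\cong{\bf Z}^r_{\geq0}$, generated by elements $\delta_1,\dots,\delta_r$. For each $M$ choose monomial lifts $u_i=a_ix^{j_i}\in S$ with $\val(u_i)=\delta_i$. Set $B_M=V[u_1,\dots,u_r]\subset V'$. The main step is to show that $B_M$ is a complete intersection over $V$. The $u_i$ are monomials in $x$ with coefficients in $K$, so the kernel of $V[U_1,\dots,U_r]\to B_M$ is a binomial-type ideal. Freeness of $M$ lets us reduce this to a single relation. Project each $\delta_i=(\beta_i,j_i)\in\Gamma\oplus{\bf Z}$. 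The relations among the $u_i$ come from relations among the $\delta_i$ in $\Delta$, and these are not detected inside $M$. After a suitable refinement of $M$ (allowed, since the family is filtered) we arrange that the $\delta_i$ are of three kinds: generators $\delta_i\in\Gamma_{\geq0}$ lifting to elements $a_i\in V$; one element $\val(x/a)$; and one element $\val(b/x)$. Then $B_M$ is a polynomial ring over $V$ in the free variables modulo the one relation $(x/a)(b/x)=b/a$, as in $B_{\tau,\nu}$ of Lemma \ref{g}, or is simply polynomial. Relation-freeness of the remaining generators follows from transcendence of $x$.

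The obstacle I expect is making this refinement compatible with the filtered system. Consider the DVR case. Near $\gamma$, the convex subgroup picture shows that $\gamma$ sits in a jump of $\Gamma$: in $W$, $\val(x)$ lies strictly between consecutive multiples of a uniformizer $\pi$, and it is not in $\Gamma$. I would therefore take $\tau$ maximal and $\nu$ minimal in the cut of $\Gamma$ determined by $\gamma$, modulo $\qq'\cap V$. Choose $\tau=\val(\pi^{n})$ and $\nu=\val(\pi^{n+1})$, adjusted by elements of $\qq'\cap V$. Then the algebras $V[x/a_\tau,\,a_\nu/x]$ with $a_\tau,a_\nu$ chosen cofinally, now varying over $\qq'\cap V$ rather than over the value group of $W$, play the role of the $B_{\tau,\nu}$. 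The inclusions $B_{\tau,\nu}\subset B_{\tau',\nu'}$ and the localization argument are then as in Lemma \ref{g}. The case where $c_i/c_s$ lies in $\qq\setminus\qq'$ is handled by the DVR structure: $x^{i-s}(c_i/c_s)$ factors through powers of $x/a_\tau$ or $a_\nu/x$ times an element of $V$, using that $\pi$ is a uniformizer.

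If this direct approach fails, I would fall back on Lemma \ref{EPGR-lem} together with the standard fact that monoid algebras of ${\bf Z}^r_{\geq0}$ twisted by units are polynomial rings. That gives $B_M$ as a complete intersection $V$-algebra, and $V'=(\bigcup_M B_M)_{\mm'\cap\bigcup B_M}$.
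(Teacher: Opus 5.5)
There is a genuine gap, and it sits exactly in the DVR case that separates this statement from Lemma~\ref{g}.

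First, your main step is false as stated. Freeness of $M\subset\Gamma'_{\geq0}$ says nothing about the relations among the lifts $u_i=a_ix^{j_i}$. Since $a_i\in K^*$, one has $K\otimes_V B_M=K[x^{j_1},\dots,x^{j_r}]$. This is a monomial curve cut out by an ideal of height $r-1$, and it need not be a complete intersection. If $\Gamma$ has rational rank at least $2$, nothing prevents $(j_1,j_2,j_3)=(3,4,5)$, and $K[x^3,x^4,x^5]$ is not even Gorenstein, so such a $B_M$ is not a complete intersection $V$-algebra. Your fallback does not repair this: the monoid algebra $V[M]$ is a polynomial ring, but it does not embed in $V'$, and its image $B_M$ is the ring at issue.

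Second, algebras of the refined form $V[x/a,b/x]\cong V[Y,Z]/(YZ-b/a)$ can never exhaust $V'$ when $W$ is a DVR; this is precisely why Lemma~\ref{g} excludes that case. Let $V$ be a DVR with uniformizer $\pi$ and $x$ transcendental. Give $K(x)$ the monomial valuation with $\Gamma'=\mathbf{Z}\val(\pi)\oplus\mathbf{Z}\epsilon$ ordered lexicographically ($\epsilon>0$ infinitely small) and $\gamma=\val(\pi)-\epsilon$. Then $\qq'=0$ and $W=V$. Here $x/a\in V'$ forces $a\in V^*$ and $b/x\in V'$ forces $b\in\pi V$. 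So all your algebras lie in $V[x,\pi/x]\cong V[Y,Z]/(YZ-\pi)$, whose localization at $(x,\pi/x)$ is a two-dimensional regular local ring. That ring does not contain $x^2/\pi=x/(\pi/x)$, although $\val(x^2/\pi)=\val(\pi)-2\epsilon>0$. In this example $\gamma$ does not lie strictly between multiples of $\val(\pi)$ in $W$. Even when it does, say $\Gamma'=\mathbf{Z}+\mathbf{Z}\sqrt{2}\subset\mathbf{R}$, $\val(\pi)=1$, $\gamma=\sqrt{2}$, the largest such algebra is $V[x/\pi,\pi^2/x]$ and $x^3/\pi^4$ is missed. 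So your claim that the monomials $(c_i/c_s)x^{i-s}$ with $c_i/c_s\in\qq\setminus\qq'$ factor through powers of $x/a_\tau$ or $a_\nu/x$ fails.

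The missing idea is to apply Lemma~\ref{EPGR-lem} not to $\Gamma'_{\geq0}$ but to the rank-two group $G=\mathbf{Z}\val(\pi)+\mathbf{Z}\gamma$, where $\pi\in V$ induces a uniformizer of $W$. Start the chain at $M_0=\mathbf{Z}_{\geq0}\gamma+\mathbf{Z}_{\geq0}\val(\pi)$. Every $M_i\cong\mathbf{Z}^2_{\geq0}$ then contains $\val(\pi)=b_{i0}\nu_{i0}+b_{i1}\nu_{i1}$ with $\gcd(b_{i0},b_{i1})=1$. The monomial lifts $t_{i0},t_{i1}$ of its basis give $V[t_{i0},t_{i1}]\cong V[T_0,T_1]/(\pi-T_0^{b_{i0}}T_1^{b_{i1}})$. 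This has a single relation precisely because $\val(\pi)$ is the only $\Gamma$-direction in $G$ and it lies in every $M_i$. In the first example these rings are $V[x^n/\pi^{n-1},\pi/x]$, with relation $(x^n/\pi^{n-1})(\pi/x)^n=\pi$.

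The part of $\Gamma$ outside the archimedean band of $\gamma$ (elements of $\qq'\cap V$ and of $V\setminus\qq$) is then handled separately, as in the paper. One adjoins $a/d$ and $d/b$ to these rings, at the cost of the one extra relation $YZ=a/b$. Your proposal also does not say how these two pieces would be combined.
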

\begin{proof}  Let $G={\bf Z}\val(\pi) +{\bf Z}\gamma\subset \Gamma'$. 
Use Lemma \ref{EPGR-lem} to find a countable sequence $M_0 \subset M_1 \subset \dots$ of submonoids of $G_{\geq 0}$ with $M_i \simeq {\bf Z}^2_{\geq 0}$ for each $i$ and $G_{\geq 0}=\cup_i M_i$ . We fix a ${\bf Z}_{\geq 0}$-basis $\nu_{i0}, \nu_{i 1}$ of $M_i$ with $(\nu_{00}, \nu_{01}) = (\gamma, \val(\pi))$, so that the elements $\nu_{i0}, \nu_{i 1}$ are $\bf Z$-linearly independent in $G$, and we express them in terms of the fixed $\bf Z$-basis:
\[
\nu_{ij} =  d_{ij 0 } \gamma +  d_{ij 1 } \val(\pi) \ \ \mbox{for unique} \ \ d_{ij 0 },  d_{ij 1 } \in {\bf Z} \ \ \mbox{and every} \ \ j = 0, 1.
\]
 Note that, by  construction, for each $i \geq 0$ and $0 \leq j \leq 1$, the element
$t_{ij} = x^{d_{ij0}} \pi^{d_{ij1}} \in K' \ \ \mbox{has valuation} \ \ \nu_{ij}.$
Since $G_{i'} \subset G_{i}$ for $i' < i$, each $t_{i'j}$ is in a unique way  a monomial in the elements $t_{i0},  t_{i1}$: 
\[
\mbox{if we express} \ \  \nu_{i'j} = b_{i'ij0} \nu_{i0} +  b_{i'ij1} \nu_{i1} \ \ \mbox{with}\ \
 b_{i'ij} \in {\bf Z}_{\geq 0}, \ \ \mbox{then} \ \ t_{i'j} = t_{i0}^{b_{i'ij0}}  t_{i1}^{b_{i'ij1}}.
\]

As the valuations of $t_{i0},  t_{i1}$ are $\bf Z$-linearly independent, the $V$-subalgebra $V[t_{i0},  t_{i1}]$ of $K'$ is  isomorphic with 
\[
V[T_{i0},  T_{i1}]/(\pi - T_{i0}^{b_{i0}}  T_{i1}^{b_{i1}}) \ \ \mbox{with} \ \ b_{ij} = b_{0i1j}  ,
\]
where $\gcd(b_{i0}, b_{i1}) = 1$. In particular, we obtain a nested sequence of $V$-subalgebras 
\[
R_i = V[t_{i0},  t_{i1}]_{(\mm,t_{i0},\,  t_{i1})} \subset V'
\]
that are injectively ind-ci over $V$.

For $d\in R_i$ with $\val(d)\in G$, $a\in \qq'$ and $b\in V$ with $b\not \in \qq \cap V$ set $y_{ad}=a/d$, $z_{b,d}=d/b$ and  consider  $B_{i,d,a,b}=R_i[y_{ad},z_{b,d}]\cong R_i[Y,Z]/(YZ-a/b)$. Then the union $A$ of all 
$B_{i,d,a,b}$ is filtered and injectively ind-ci over $V$. We have to show that $A_{\mm'\cap A}=V'$.

As in the proof of Lemma \ref{g}, it remains to show that  all $(c_e/c_s)x^{e-s}$, $c_i,c_s\in V$  from $\mm'$ are in fact in $A$. 
If $(c_e/c_s)x^{e-s}= dt$ for some $d\in R_i$ with $\val(d)\in G$, $i\in {\bf N}$ and $t\in K$ with $t\not \in \qq\cap V$
then either  $(c_e/c_s)x^{e-s}\in V[d]\subset R_i$ if $\val(t)\geq 0$, or $(c_e/c_s)x^{e-s}\in V[d/b]\subset B_{i,d,a,b} \subset A$ for any $a$ and  $b=t^{-1}$ in case $\val(t)<0$. 

Now assume that $(c_e/c_s)x^{e-s}= dt$ for some $d\in K'$ with $\val(d)\in G$ and $t\in \qq'$. If $\val(d)\geq 0$ then  $(c_e/c_s)x^{e-s}\in V[d]\subset R_i$ for some $i$. If $\val(d)< 0$ then $d^{-1}\in R_i $ for some   
   $i$ and  $(c_e/c_s)x^{e-s}\in V[a/d^{-1}]\subset B_{i,d^{-1},a,b}$
for $a=t$ and any $b$. Thus in both cases $(c_e/c_s)x^{e-s}\in A$.
\hfill\ \end{proof}

\begin{Proposition} \label{pr2}
Let $V\subset V'$ be an extension of valuation rings with  trivial residue field extension,  $K\subset K'$ their fraction field extension, $\Gamma\subset \Gamma'$ their value group extension and $(x_i)_{i\in I}$ a set of elements of $V'$ inducing a ${\bf Z}$-basis of $\Gamma'/\Gamma$. Assume  that  $K'=K((x_i)_{i\in I})$.
Then $V'$ is  injectively ind-ci
over $V$.
\end{Proposition}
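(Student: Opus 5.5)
The plan is to reduce to finitely many generators and then adjoin them one at a time, using Lemma \ref{g} and Proposition \ref{p9} at each step.

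\emph{Reduction to finite $I$.} For a finite subset $J\subset I$ set $K_J=K((x_i)_{i\in J})$ and $V_J=V'\cap K_J$. Then $V'=\bigcup_J V_J$, and the union is filtered because $J\subset J'$ gives $V_J\subset V_{J'}$. A filtered union of injectively ind-ci subalgebras is again injectively ind-ci: a finitely generated $V$-subalgebra of $V'$ lies in some $V_J$, hence in a complete intersection subalgebra of $V_J$. So it suffices to treat finite $I=\{1,\ldots,n\}$, and we argue by induction on $n$.

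\emph{The inductive step.} Let $V_j=V'\cap K(x_1,\ldots,x_j)$, with value group $\Gamma_j$. Since the images of $\val(x_i)$ are $\bf Z$-independent in $\Gamma'/\Gamma$, the elements $x_1,\ldots,x_n$ are algebraically independent over $K$, by the argument with \cite[VI (10.3)]{Bou} used in Lemma \ref{g}. Moreover $\Gamma_j=\Gamma+\sum_{i\le j}{\bf Z}\val(x_i)$: a rational function in the $x_i$ with $V$-coefficients has value equal to the minimum of the values of its monomials, since distinct monomials have distinct values. The residue field of each $V_j$ equals $k$, because $k\subset k_j\subset k'=k$. Hence the extension $V_{j-1}\subset V_j$ has trivial residue field extension, $K(x_1,\ldots,x_j)=K(x_1,\ldots,x_{j-1})(x_j)$, and $\Gamma_j/\Gamma_{j-1}\cong{\bf Z}$ is generated by $\val(x_j)$. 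Depending on whether the corresponding $W$ is a DVR or not, Proposition \ref{p9} or Lemma \ref{g} applies, and $V_j$ is injectively ind-ci over $V_{j-1}$.

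\emph{Composing the steps.} We then need transitivity: if $B$ is injectively ind-ci over $A$ and $A$ is injectively ind-ci over $V$, then $B$ is injectively ind-ci over $V$. Take a finitely generated $V$-subalgebra $E\subset B$. It lies in a complete intersection $A$-subalgebra $A[Y]/(Q)$ of $B$. The coefficients of $Q$ lie in a complete intersection $V$-subalgebra $D\subset A$. Since a complete intersection over a complete intersection is a complete intersection \cite[Lemma 6]{P2}, $D[Y]/(Q)$ is a complete intersection $V$-algebra.

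This composition step is where I expect the main obstacle. One must ensure that $D[Y]/(Q)$ still maps injectively into $B$ and that $Q$ remains a regular sequence over $D$, not only over $A$. I would first try to enlarge $D$ inside $A$, using flatness of $A$ over $V$ (it is torsion free over a valuation ring) and the fact that $A$ is a filtered union of such $D$. By a standard limit argument, regularity of $Q$ and injectivity then hold for $D$ large enough. If this fails, it is enough to conclude that $V'$ is ind-ci, since filtered colimits of ind-ci algebras compose.
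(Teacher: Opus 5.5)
Your route is the paper's own. You pass to finite $I$ through the filtered union of the rings $V'\cap K((x_i)_{i\in J})$. You then adjoin $x_1,\dots,x_n$ one at a time, applying Lemma~\ref{g} or Proposition~\ref{p9} to $V_{j-1}\subset V_j$ according to whether the relevant $W$ is a DVR. Your check of the hypotheses at each step is correct and more explicit than the paper's ``by recurrence''.

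The gap is the composition step. The paper hides it in that phrase, and you flag it but do not close it. Your limit argument does not work as stated, for three reasons.
\begin{enumerate}
\item Flatness of $A$ over $V$ is irrelevant. What you need is injectivity of $D[Y]/(Q)\to A[Y]/(Q)=A\otimes_D D[Y]/(Q)$, and that depends on $D\to A$ and on $D[Y]/(Q)$ as a $D$-module.
\item The obstruction $\bigl((Q)A[Y]\cap D[Y]\bigr)/(Q)D[Y]$ can acquire new elements each time $D$ is enlarged. Killing finitely many of them at a larger stage therefore gives no stabilization.
\item For $Q$ of length $>1$, regularity in $A[Y]$ does not automatically descend to $D[Y]$.
\end{enumerate}
Your fallback only yields that $V'$ is ind-ci, and even that, argued this way, needs $Q$ to stay regular over $D$. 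This is weaker than the claim. The injective form is what is used later, e.g.\ for the unions in the proof of Theorem~\ref{t9}.

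To close the gap, use the explicit shape of the algebras rather than an abstract transitivity. In Lemma~\ref{g}, $V_j$ is the filtered union of the rings $B[1/s]$, where $B=V_{j-1}[y,z]\cong V_{j-1}[Y,Z]/(YZ-c)$ and $s=S(y,z)\in B$ is a unit of $V_j$.
\begin{enumerate}
\item For every subring $D\subset V_{j-1}$ with $c\in D$, the ring $D[Y,Z]/(YZ-c)$ is a free $D$-module on $Y^i$ $(i\ge 0)$, $Z^j$ $(j\ge 1)$. So it injects into $V_{j-1}[Y,Z]/(YZ-c)\cong B$, and in particular it is a domain.
\item Suppose $D$ also contains the coefficients of $S$ and of the finitely many elements you want to capture. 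Then $(YZ-c,\,ST-1)$ is a regular sequence in $D[Y,Z,T]$.
\item Moreover $D[Y,Z,T]/(YZ-c,ST-1)=(D[Y,Z]/(YZ-c))_S$ embeds in $B[1/s]\subset V_j$, since localization is exact.
\item Choose $D$ to be a complete intersection $V$-subalgebra of $V_{j-1}$, which exists by the induction hypothesis. Applying \cite[Lemma 6]{P2} then gives the required complete intersection $V$-subalgebra of $V_j$.
\end{enumerate}
The presentations in Proposition~\ref{p9} are of the same binomial type: $\pi-T_0^{b_0}T_1^{b_1}$ and $YZ-a/b$, up to localization. Their quotients are again free over any $D$ containing the coefficients, so the same argument applies there.
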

\begin{proof} If $I$ is finite then apply Lemma \ref{g} and Proposition \ref{p9} by recurrence. If $I$ is infinite note that $V'$ is a filtered union of $V'\cap K((x_i)_{i\in J})$, where $J$ runs in the set of all finite subsets of $I$.
\hfill\ \end{proof}

\begin{Proposition} \label{p3} Let  $ V\subset V'$ be an   extension  of  valuation rings with  trivial residue field extension,   $\Gamma\subset \Gamma'$ its value group extension, $K\subset K'$ their fraction field extension, $k$ the residue field of $V$,  and $p=\ch(k)$.
  Assume that 
 \begin{enumerate}
  \item $V'$ is Henselian,
 \item    $\Gamma'/\Gamma$ is a finitely   generated group without $p$-torsion if $p>0$.
 \end{enumerate}
  Then $V'$ is   ind-ci over $V$.
 \end{Proposition}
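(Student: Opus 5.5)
The approach is to split $\Gamma'/\Gamma$ into its torsion part and a free part. Each part is then handled by the results already proved: Theorem \ref{t8} for torsion and Proposition \ref{pr2} for free quotients. The two steps are glued by transitivity of ind-ci, using \cite[Lemma 6]{P2}.

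\textbf{Step 1 (torsion part).} Since $\Gamma'/\Gamma$ is finitely generated, its torsion subgroup $T/\Gamma$ is finite, and $\Gamma'/T$ is free of finite rank $n$. For each generator of $T/\Gamma$ choose elements of $V'$ realizing it. As in the proof of Theorem \ref{t8}, these can be taken algebraic over $K$ with $x^m\in K$, using the Henselian property and $p\nmid m$. Let $K_1\subset K'$ be the field they generate over $K$. By the arguments of Proposition \ref{pr}, the value group of $V_1=V'\cap K_1$ is exactly $T$.

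Because $V_1$ itself need not be Henselian, I would replace it by its Henselization inside $V'$, which is possible since $V'$ is Henselian. Call this $V_1^h$. It has the same value group $T$ and residue field $k$. Theorem \ref{t8} then gives that $V_1^h$ is ind-ci over $V$.

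\textbf{Step 2 (free part).} Choose $x_1,\dots,x_n\in V'$ whose values form a $\bf Z$-basis of $\Gamma'/T$. Put $K_2=K_1^h(x_1,\dots,x_n)$, where $K_1^h$ is the fraction field of $V_1^h$, and $V_2=V'\cap K_2$. The residue field of $V_2$ is $k$, and its value group is $T+\sum{\bf Z}\val(x_i)=\Gamma'$. By Proposition \ref{pr2}, applied to $V_1^h\subset V_2$, the ring $V_2$ is injectively ind-ci over $V_1^h$.

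\textbf{Step 3 (immediate part).} The extension $V_2\subset V'$ is immediate: both the residue field and the value group are already full. Theorem \ref{T2} therefore shows that $V'$ is injectively ind-ci over $V_2$.

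\textbf{Gluing.} Finally, compose the three steps. A complete intersection algebra over a complete intersection algebra is again complete intersection by \cite[Lemma 6]{P2}. A standard filtered-colimit argument then passes this to ind-ci towers: finitely presented pieces factor through finite stages, which are ci.

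\textbf{Main obstacle.} I expect the hardest point to be the last composition. An ind-ci algebra over an ind-ci algebra must be shown to be ind-ci over the base. This requires descending each complete intersection $V_2$-algebra, which is finitely presented, to a complete intersection over some finite stage $C$ of the colimit presenting $V_2$ over $V_1^h$. I also need the regular sequence to stay regular after such a descent, possibly after enlarging $C$. This should follow from the flatness of the colimit and the standard limit arguments of EGA IV §8, but it needs care.

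A secondary subtlety is in Step 1: one must check that the value group of $V_1$ is exactly $T$ and not larger. I would handle this with the fundamental inequality, since $[K_1:K]=|T/\Gamma|$ and the residue extension is trivial.
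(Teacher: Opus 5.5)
Your proposal is correct and follows the paper's route: reduce the torsion part of $\Gamma'/\Gamma$ with the $m$-th root normalization of Proposition \ref{pr}, treat the free part with Proposition \ref{pr2}, finish the remaining immediate extension with Theorem \ref{T2}, and compose by transitivity of ind-ci, which the paper also takes for granted via Lemma 6 of \cite{P2}. The only difference is cosmetic: you pass to the Henselization $V_1^h$ inside $V'$ so as to quote Theorem \ref{t8}, whereas the paper applies Proposition \ref{pr} step by step inside the Henselian ring $V'$ and so reaches a subring with value group $T$ directly, without needing the Henselization.
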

\begin{proof} Let $\Gamma_1\subset \Gamma'$ be the subgroup containing $\Gamma$ which defines the torsion part of $\Gamma'/\Gamma$. Choose $x\in V'$ such that $\val(x)\in \Gamma_1$ and apply Proposition \ref{pr} for $V\subset V_1=V'\cap K(x)$. Then the value group of $V_1$ is $\Gamma+{\bf Z}\val(x)$. Applying step by step Proposition \ref{pr} we arrive to a valuation subring $V''\subset V'$ with the value group $\Gamma''$ such that $\Gamma'/\Gamma''$ is torsion free and $V''$ is  injectively ind-ci over $V$. Then $\Gamma'/\Gamma$ is free being finitely generated torsion free and   applying 
Proposition \ref{pr2} we get  a valuation subring $V_2\subset V'$ such that the extension  $V_2\subset V'$ is immediate and $V_2$ is  injectively  ind-ci over $V''$. Now it is enough to apply Theorem 
\ref{T2} for $V_2\subset V'$.
 \hfill\ \end{proof}
 
 \vskip 0.3 cm
 \section{Extensions of valuation rings with finitely generated residue field extensions}
 
A field extension $K\subset K'$ is {\em separably generated} if $K'$ is an algebraic separable extension of a pure transcendental extension of $K$. Thus a separable finite type field extension is separably generated. 
 
\begin{Lemma}\label{L}   Let  $ V\subset V'$ be an   extension  of  valuation rings and $k\subset k'$ its residue field extension. Assume  $V'$ is Henselian and $k'/k$ is separably generated. Then $V'$ is an extension of a valuation ring $W$ containing $V$ such that 
\begin{enumerate}
\item  the    value group extension of  $V\subset W$ is trivial, 

\item the residue field extension of $W\subset V'$ is trivial, and
\item $W$ is a filtered union of its smooth $V$-subalgebras.

\end{enumerate}
\end{Lemma}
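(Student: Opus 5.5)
Since $k'/k$ is separably generated, I will choose a transcendence basis $(\bar y_j)_{j\in J}$ of $k'$ over $k$ such that $k'$ is separable algebraic over $k_0=k((\bar y_j)_{j\in J})$. Then I lift each $\bar y_j$ to an element $y_j\in V'$. The $y_j$ are algebraically independent over $K$, and the restriction of the valuation of $V'$ to $K(y)$ is the Gauss valuation. Put $W_0=V'\cap K(y)$. The standard Gauss-valuation argument gives that the nonzero monomial coefficients of a polynomial in $y$ control its value. Hence $W_0=V[y]_{\mm V[y]}$, where $\mm$ is the maximal ideal of $V$. So $W_0$ has value group $\Gamma$ and residue field $k_0$. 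Moreover $W_0$ is a filtered union of localizations $V[y_J]_{\mm V[y_J]}$ with $J$ finite, and each of these is a filtered union of localizations of $V[y_J]$ at single elements $h\notin \mm V[y_J]$. Those localizations are smooth $V$-subalgebras.

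Next I handle the separable algebraic part $k_0\subset k'$. For each $\bar\theta\in k'$, separable over $k_0$, I take a monic separable minimal polynomial $\bar g\in k_0[T]$ and lift it to a monic $g\in W_0[T]$. Since $V'$ is Henselian and $\bar g$ has the simple root $\bar\theta$ in $k'$, Hensel's lemma gives a root $\theta\in V'$ of $g$ lifting $\bar\theta$. Then $g$ is irreducible over the fraction field of $W_0$, because its reduction is irreducible of the same degree. So $K(y,\theta)$ has degree $\deg g=[k_0(\bar\theta):k_0]$. By the fundamental inequality, the valuation on $K(y,\theta)$ is unramified: it has value group $\Gamma$, residue field $k_0(\bar\theta)$, and valuation ring the localization of $W_0[\theta]\cong W_0[T]/(g)$ at its maximal ideal. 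This ring is a localization of a standard étale $W_0$-algebra $W_0[T]_{g'}/(g)$. I then take $W$ to be the union $V'\cap L$, where $L$ runs over the subfields $K(y,\theta_1,\dots,\theta_n)$ obtained from finitely many such lifts. Iterating the argument, or using a primitive element for each finite separable subextension $k_0(\bar\theta_1,\dots,\bar\theta_n)$, each $V'\cap L$ is a localization of an étale algebra over $W_0$, with value group $\Gamma$ and residue field $k_0(\bar\theta_1,\dots,\bar\theta_n)$. Hence $W$ is a valuation ring with value group $\Gamma$ and residue field $k'$, which proves (1) and (2). To make the union filtered I need compatible choices of lifts; I will fix one lift $\theta$ per element $\bar\theta$ of $k'$ and take all finite subsets.

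For (3) I compose the two steps. Each $V'\cap L$ is a localization $S_{\mathfrak p}$ of an étale algebra $S$ over a smooth $V$-algebra $V[y_J]_h$, after descending $g$ to coefficients in some $V[y_J]_h$ and inverting $g'(\theta)$ together with another suitable element. A localization at a prime is the filtered union of localizations at single elements $s\notin\mathfrak p$, and these are smooth over $V$. Taking the union over $J$, $h$, the finite sets of $\theta$'s and the elements $s$ gives a filtered system of smooth $V$-subalgebras of $V'$ whose union is $W$.

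I expect the main obstacle to be the bookkeeping in this last step. One has to ensure that the descended algebras are subalgebras of $V'$, which holds because all of them sit inside $K'$ as $V[y,\theta]$-type rings. One also has to ensure that any two of them lie in a common member of the system. This holds once the generators are fixed lifts and coefficients are enlarged by including more $y_j$ and more inverted elements.
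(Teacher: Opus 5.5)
Your overall strategy is the paper's. You take the Gauss extension $W_0=V[y]_{\mm V[y]}$ and then Hensel-lift separable residue elements, which gives unramified, essentially \'etale extensions; your one-element step is correct. The gap is in passing to finitely many lifts.

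The paper adjoins lifts one at a time, by transfinite induction or Zorn. Each time it lifts the minimal polynomial of the new residue element over the \emph{current} residue field, so your one-element argument (irreducibility of the lift, degree equal to residue degree, $e=1$) applies verbatim at every stage. You instead fix one lift per $\bar\theta\in k'$, always lifting the minimal polynomial over $k_0$, and then take composita $L=K(y,\theta_1,\dots,\theta_n)$. For independently chosen lifts neither ``iterating the argument'' nor ``using a primitive element'' works, because $[L:K(y)]$ can exceed $[k_0(\bar\theta_1,\dots,\bar\theta_n):k_0]$.

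For example, let $V=\mathbf Z_{(3)}$ and let $V'$ be Henselian with residue field $\mathbf F_9$. Take $\bar\theta_1^2=2$ with $g_1=T^2-2$, and $\bar\theta_2=\bar\theta_1+1$ with the lift $g_2=(T-1)^2-5$ of $\bar g_2=(T-1)^2-2$. Then:
\begin{enumerate}
\item $L=\mathbf Q(\sqrt2,\sqrt5)$ has degree $4$ but residue degree $2$, and the $3$-adic valuation has two extensions to $L$;
\item the minimal polynomial $(T-1)^2-5$ of $\theta_2$ over $K(\theta_1)$ has reduction that splits over $\mathbf F_9$, so the second iteration step (irreducible because the reduction is irreducible, then the fundamental inequality) fails;
\item a Hensel lift $\eta$ of a primitive element of the residue extension need not lie in $L$, so $V'\cap L$ is not described by $W_0[\eta]$.
\end{enumerate}

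What you claim for $V'\cap L$ (a localization of an \'etale $W_0$-algebra, with value group $\Gamma$ and residue field $k_0(\bar\theta_1,\dots,\bar\theta_n)$) is true, but needs an argument you have not given. There are two ways to supply it.

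\emph{Option 1: build the lifts successively, as the paper does.} Property (3) then survives unions of chains. Indeed, a ring is a filtered union of smooth $V$-subalgebras as soon as every finite subset lies in some smooth $V$-subalgebra, since the family of all of them is then directed. This also removes your bookkeeping worry.

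\emph{Option 2: keep your fixed lifts and argue as follows.}
\begin{enumerate}
\item $W_0[\theta_1,\dots,\theta_n]$ is the quotient of the finite \'etale $W_0$-algebra $S=W_0[T_1,\dots,T_n]/(g_1(T_1),\dots,g_n(T_n))$ by the kernel of $T_i\mapsto\theta_i$. This kernel lies over $0$ and is therefore a minimal prime.
\item $S$ is normal with finitely many minimal primes, so it is a finite product of domains. Hence $W_0[\theta_1,\dots,\theta_n]$ is a direct factor of $S$ and is finite \'etale over $W_0$.
\item It follows that $W_0[\theta_1,\dots,\theta_n]$ is the integral closure of $W_0$ in $L$, and $V'\cap L$ is its localization at $\mm'$.
\item For the value group, uniqueness of Hensel roots places every $\theta_i$ in the Henselization of $W_0[\eta]$ inside $V'$. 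That Henselization is immediate over $W_0[\eta]$, so $V'\cap L$ has value group $\Gamma$ and residue field $k_0(\bar\theta_1,\dots,\bar\theta_n)$.
\end{enumerate}
Running the same argument over $V[y_J]_h$, with $y$ replaced by a finite $y_J$ inside $L$ (your $L$ contains all of $y$, so it is not literally a localization over $V[y_J]_h$ with $J$ finite), makes your descent for (3) go through.
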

\begin{proof}  Let $\mm,\mm'$ be the maximal ideals of $V, V'$.
By hypothesis there exists a system of elements $x$ of $V'$ inducing a separable transcendental basis of $k'$ over $k$. Then $V''=V[x]_{\mm V[x]}\cong V[X]_{\mm V[X]}$ is a valuation ring with the same value group as $V$ and   the residue field extension of $V''\subset V'$ is algebraic separable. Let ${\bar y}\in k'$ which is not in the residue field $k''$ of $V''$ and ${\bar f}=$\ Irr$({\bar y},k'')\in k''[Y]$. Let $f\in V''[Y]$ be a monic polynomial lifting $\bar f$. As $V'$ is Henselian we may lift $\bar y$ to a solution of $f$ in $V'$ and $V_1=(V''[Y]/(f))_{\mm V''[Y]}$ is a valuation ring with the same value group as $V$ and such that $\bar y$ is contained in the residue field of $V_1$. Using this trick by transfinite induction or by Zorn's Lemma we get the valuation ring $W$.   
\hfill\ \end{proof}

\begin{Theorem} \label{t} Let  $ V\subset V'$ be an   extension  of  valuation rings,   $\Gamma\subset \Gamma'$ its value group extension, $K\subset K'$ their fraction field extension, $k\subset k'$ its residue field extension,  and $p=\ch(k)$.
  Assume that 
 \begin{enumerate}
  \item $V'$ is Henselian and $k'/k$ is separably generated,
 \item    $\Gamma'/\Gamma$ is a finitely generated group without $p$-torsion if $p>0$.
 \end{enumerate}
  Then $V'$ is    ind-ci over $V$.
 \end{Theorem}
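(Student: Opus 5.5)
The plan is to split the extension $V\subset V'$ into two steps and then compose them. The first step absorbs the residue field extension. The second is an extension with trivial residue field extension, which Proposition \ref{p3} handles.

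First apply Lemma \ref{L}. Since $V'$ is Henselian and $k'/k$ is separably generated, it gives a valuation ring $W$ with $V\subset W\subset V'$ such that $V\subset W$ has trivial value group extension, $W\subset V'$ has trivial residue field extension, and $W$ is a filtered union of its smooth $V$-subalgebras. Every smooth $V$-algebra is in particular ind-ci over $V$. Locally it is étale over a polynomial algebra, and a standard étale algebra $(V[X]_g)/(f)$ with $f$ monic is of the form $V[X,T]/(f,gT-1)$, a complete intersection. A smooth algebra itself is a localization of a finite type algebra presented by a regular sequence. So I would deduce that $W$ is ind-ci over $V$, possibly by passing to a filtered colimit of standard smooth presentations rather than using the subalgebras directly.

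Next consider $W\subset V'$. Its residue field extension is trivial. Its value group extension is $\Gamma_W=\Gamma\subset\Gamma'$, so $\Gamma'/\Gamma_W=\Gamma'/\Gamma$, which is finitely generated and without $p$-torsion when $p>0$. The characteristic of the residue field of $W$ is still $p$, because it is an extension of $k$. Finally $V'$ is Henselian. So Proposition \ref{p3} applies and shows that $V'$ is ind-ci over $W$.

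To compose, write $W=\dirlim W_i$ with $W_i$ complete intersection $V$-algebras, and $V'=\dirlim C_j$ with $C_j=W[Z]/(P_j)$ complete intersection $W$-algebras. Each $C_j$ is finitely presented over $W$, so each $C_j$ descends to some $W_i$: there is a polynomial system $P_{ij}$ over $W_i$ with $P_{ij}\otimes W=P_j$. One then needs that, for $i$ large, $P_{ij}$ is regular in $W_i[Z]$, or can be modified so that it is. Granting this, $W_i[Z]/(P_{ij})$ is a complete intersection over $W_i$, hence over $V$ by \cite[Lemma 6]{P2}. These algebras form a filtered system whose colimit is $V'$, which proves the theorem.

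The main obstacle is this descent of regularity along the colimit. Regularity of a sequence does not obviously descend. I would try to avoid the issue by following the structure of the earlier proofs. The algebras produced in Proposition \ref{p3} come from Lemma \ref{g}, Proposition \ref{p9} and Proposition \ref{pr}, and have the explicit shapes $R[Y,Z]/(YZ-c)$, $R[T]/(h)$ with $h$ primitive, or $R[T_0,T_1]/(\pi-T_0^{b_0}T_1^{b_1})$. In each shape regularity holds over any domain $R$ in which the constants are nonzero, resp. in which $h$ remains primitive. The relations are defined over $W$, and so over $W_i$ for $i$ large. Since the smooth $V$-subalgebras of $W$ are domains, this gives regularity directly. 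If that route fails, I would instead use flatness: when $W_i\to W$ is flat and $W_i[Z]/(P_{ij})\otimes W$ has the expected dimension, regularity can be checked on fibers. I expect one of these two arguments to handle the obstacle.
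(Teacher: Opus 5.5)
Your proposal is the paper's proof: Lemma \ref{L} produces $W$, a filtered union of smooth $V$-algebras (\'etale neighbourhoods of polynomial $V$-algebras, \cite[Theorem 2.5]{S}), and Proposition \ref{p3} is applied to $W\subset V'$, after which the paper simply says ``we are done'', treating the transitivity step you worry about as routine (compare the appeal to \cite[Lemma 6]{P2} in Theorem \ref{t8}). If you want to make that step rigorous, use your flatness/descent route rather than the first one, because Proposition \ref{p3} also relies on Theorem \ref{T2}, whose complete intersection subalgebras are not of the explicit shapes you list.
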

 \begin{proof} Applying Lemma \ref{L} we  see that $V'$ is an extension of a valuation subring $W$ such that the residue field extension of  $ W\subset V'$ is trivial, $W$ being a filtered union of smooth $V$-algebras (etale neighborhoods of some polynomial $V$-algebras, see \cite[Theorem 2.5]{S}). Using Proposition \ref{p3} for $W\subset V'$ we are done.
\hfill\ \end{proof}

\vskip 0.5 cm

\section{The proof of Theorem \ref{T4}}

In this section we use methods from Model Theory and
we need mainly \cite[Proposition A.6]{P},  which is obtained using \cite[Theorem 6.1.4]{CK} and says in particular the following:

\begin{Proposition}\label{kes} Let $V$ be a valuation ring with value group $\Gamma$. Then there exists an ultrafilter ${\sU}$ on a set $U$ such that any system of polynomial equations
$(g_i((X_j)_{j\in J})_{i\in I}$ with $\card(I)\leq \card(U)$ in variables $(X_j)_{j\in J}$ with coefficients in  the ultrapower ${\tilde V}=\Pi_{{\sU}}V$ has a solution in  ${\tilde V}$ if and only if all its 
finite subsystems have.
\end{Proposition}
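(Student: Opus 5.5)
The approach is to read the statement as a saturation property of a suitable ultrapower and to invoke Keisler's theorem on good ultrafilters. The implication ``only if'' is trivial: a solution of the whole system solves every finite subsystem. For the converse, let $\lambda=\max(\card V,\aleph_0)$ and take $U$ of cardinality $\lambda$. By \cite[Theorem 6.1.4]{CK} (Keisler–Kunen) there is a countably incomplete $\lambda^+$-good ultrafilter ${\sU}$ on $U$. Consider $V$ as a structure in the language of rings. By the standard consequence of goodness (\cite[Theorem 6.1.8]{CK}), the ultrapower ${\tilde V}=\Pi_{{\sU}}V$ is $\lambda^+$-saturated. Since $\card({\tilde V})\le \card(V)^{\lambda}$ is not needed anywhere, this is the only property of ${\sU}$ I will use: every set of formulas in one free variable, with at most $\lambda$ parameters from ${\tilde V}$, that is finitely satisfiable in ${\tilde V}$ is realized in ${\tilde V}$.

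Now fix a system $(g_i)_{i\in I}$ with $\card(I)\le\card(U)=\lambda$, and assume every finite subsystem has a solution in ${\tilde V}$.

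\emph{Reducing the variables and parameters.} Each $g_i$ involves only finitely many variables and coefficients. Let $J_0\subset J$ be the set of variables that actually occur. Then $\card(J_0)\le\lambda$, and the set $P$ of all coefficients has $\card(P)\le\lambda$. Variables in $J\setminus J_0$ can be assigned $0$, so we may assume $J=J_0$. Well-order it as $(X_\alpha)_{\alpha<\kappa}$ with $\kappa\le\lambda$.

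\emph{Choosing values one variable at a time.} I will choose values $x_\alpha\in{\tilde V}$ by transfinite recursion, maintaining the invariant $(\ast)_\alpha$:
\begin{quote}
for every finite $I_0\subset I$, the subsystem $(g_i)_{i\in I_0}$, with $X_\beta$ replaced by $x_\beta$ for all $\beta<\alpha$, has a solution in the remaining variables.
\end{quote}
$(\ast)_0$ is exactly the hypothesis. At a limit stage, $(\ast)_\alpha$ holds automatically: a finite subsystem mentions only finitely many $X_\beta$, and these have $\beta<\gamma$ for some $\gamma<\alpha$, so $(\ast)_\alpha$ for that subsystem follows from $(\ast)_\gamma$.

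At a successor step, given $(\ast)_\alpha$, consider the set $\Sigma_\alpha(X_\alpha)$ of formulas
$$\exists\,\bar Y\ \bigwedge_{i\in I_0} g_i=0,$$
where $I_0$ ranges over the finite subsets of $I$, $X_\beta$ is replaced by $x_\beta$ for $\beta<\alpha$, and $\bar Y$ lists the finitely many other variables occurring in the $g_i$, $i\in I_0$.
\begin{enumerate}
\item There are at most $\lambda$ such formulas, and they use at most $\lambda$ parameters, namely $P\cup\{x_\beta\}_{\beta<\alpha}$.
\item $\Sigma_\alpha$ is closed under finite conjunctions up to equivalence, since the union of finitely many finite $I_0$ is again finite.
\item Each member of $\Sigma_\alpha$ is satisfiable by $(\ast)_\alpha$.
\end{enumerate}
Hence $\Sigma_\alpha$ is finitely satisfiable, and $\lambda^+$-saturation gives a realization $x_\alpha$. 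This is precisely $(\ast)_{\alpha+1}$.

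\emph{Conclusion.} At the end, each single equation $g_i$ involves only finitely many variables, all of which have been assigned. By $(\ast)_\kappa$ applied to $I_0=\{i\}$, the condition is that $g_i=0$ holds at $(x_\beta)_{\beta<\kappa}$ with no remaining variables. So $(x_\alpha)$ solves the whole system in ${\tilde V}$.

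The main obstacle is the passage from saturation for one-variable types to systems in up to $\lambda$ variables. This is handled by the recursion above, where existential quantification over the not-yet-assigned variables keeps each stage a one-variable type with at most $\lambda$ parameters. The other nontrivial input is the existence of countably incomplete $\lambda^+$-good ultrafilters, which I take from \cite{CK}.
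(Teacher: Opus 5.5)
Your proof is correct and takes the same approach the paper indicates. The paper gives no argument of its own; it only attributes the statement to a reference built on the Keisler--Kunen existence of countably incomplete $\lambda^+$-good ultrafilters (Chang--Keisler, Theorem 6.1.4). Your proof fills in the details soundly: you pass to $\lambda^+$-saturation of $\Pi_{\sU}V$, then assign the variables one at a time, existentially quantifying the not-yet-assigned ones so that each stage is a one-variable type over at most $\lambda$ parameters.

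Your proof uses nothing about $\lambda=\max(\card V,\aleph_0)$ beyond $\lambda$ being infinite. This matters for the later application, which needs $\card(P_1)$ to exceed the size of the systems $G,F$.
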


Next theorem extends Theorem \ref{t} and this is the goal of this section.

\begin{Theorem} \label{t9} Let  $ V\subset V'$ be an   extension  of  valuation rings,   $\Gamma\subset \Gamma'$ its value group extension, $K\subset K'$ their fraction field extension, $k\subset k'$ its residue field extension,  and $p=\ch(k)$.
  Assume that 
 \begin{enumerate}
  \item $V'$ is Henselian.
\item $k'/k$ is separable,
 \item    $\Gamma'/\Gamma$   has no $p$-torsion, if $p>0$.
 \end{enumerate}
  Then $V'$ is  ind-ci filtered  over $V$. 
\end{Theorem}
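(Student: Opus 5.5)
We reduce Theorem \ref{t9} to Theorem \ref{t}, which already treats the case where $k'/k$ is separably generated and $\Gamma'/\Gamma$ is finitely generated without $p$-torsion. The passage to the general case uses the ultrapower device of Proposition \ref{kes}.

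First, write $V'$ as a filtered union of valuation subrings $V_\lambda=V'\cap K_\lambda$, where $K_\lambda$ runs over the subfields of $K'$ finitely generated over $K$. Such a $V_\lambda$ need not be Henselian, so we replace it by its Henselization $V_\lambda^h$ inside $V'$; this is possible because $V'$ is Henselian. Henselization does not change the value group or the residue field.

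For each $\lambda$ the following hold.
\begin{enumerate}
\item The residue field $k_\lambda$ of $V_\lambda$ is finitely generated over $k$, since $K_\lambda/K$ is of finite type (Abhyankar's inequality). It is a subfield of the separable extension $k'/k$, hence separable, hence separably generated.
\item The value group $\Gamma_\lambda$ satisfies that $\Gamma_\lambda/\Gamma$ is finitely generated, again by Abhyankar's inequality.
\item $\Gamma_\lambda/\Gamma$ embeds in $\Gamma'/\Gamma$, so it has no $p$-torsion.
\end{enumerate}
Theorem \ref{t} therefore applies to $V\subset V_\lambda^h$, and each $V_\lambda^h$ is ind-ci over $V$.

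The remaining step is to pass to the filtered union. We must show that every morphism from a finitely presented $V$-algebra $E$ into $V'$ factors through a complete intersection $V$-algebra; this is the standard criterion for being a filtered colimit of objects of the class. Such a morphism factors through some $V_\lambda^h$, because $E$ is finitely presented and $V'=\bigcup_\lambda V_\lambda^h$ is filtered. Since $V_\lambda^h$ is ind-ci, the map $E\to V_\lambda^h$ factors through a complete intersection algebra $C/(P)$, and composing with the inclusion $V_\lambda^h\subset V'$ gives the required factorization $E\to C/(P)\to V'$. The class of complete intersection algebras is closed under the relevant compositions by \cite[Lemma 6]{P2}, and this finishes the argument.

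The main obstacle is the step where finite generation of $\Gamma_\lambda/\Gamma$ and separable generation of $k_\lambda/k$ are used. If Abhyankar-type finiteness fails, or if we want to avoid fraction-field truncation, we fall back on Proposition \ref{kes}. In that approach we embed $V'$ into an ultrapower $\tilde V'$ and realize the factorization problem as a system of polynomial equations. The system is solvable in finite pieces because of the finitely generated case, and saturation then gives a global solution. Solutions in the ultrapower are then pulled back to $V'$ by Łoś's theorem, since the equations involved are finite. We expect the delicate point to be checking the separability hypothesis for $k_\lambda$: separability of $k'/k$ passes to the subextension, and for a finitely generated extension separable is equivalent to separably generated, which is exactly what Theorem \ref{t} needs.
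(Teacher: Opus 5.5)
\paragraph{Verdict.} There is a genuine gap, and it sits exactly at the step you flag as the main obstacle.

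\paragraph{The truncation step fails.} Abhyankar's inequality does not give finite generation. For $K_\lambda/K$ finitely generated it only gives
\[
\trdeg(k_\lambda/k)+\dim_{\bf Q}(\Gamma_\lambda/\Gamma)\otimes{\bf Q}\le \trdeg(K_\lambda/K).
\]
Finite generation of $k_\lambda/k$ and of $\Gamma_\lambda/\Gamma$ follows only when equality holds, or when $K_\lambda/K$ is algebraic. In general it fails. It is classical that a valuation of ${\bf Q}(x,y)$ trivial on ${\bf Q}$ can have a value group that is not finitely generated (e.g.\ one defined by a Puiseux-type series with unbounded denominators). It can also have a residue field that is an infinite algebraic extension of ${\bf Q}$.

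Take $V={\bf Q}$ and $V'$ the Henselization of such a valuation ring. All hypotheses of the theorem hold. Yet every finitely generated $K_\lambda\supset{\bf Q}(x,y)$ has $\Gamma_\lambda/\Gamma$ not finitely generated, so Theorem \ref{t} never applies to $V\subset V_\lambda^h$. Moreover, once $k_\lambda/k$ is not finitely generated, separability no longer implies separable generation, which Theorem \ref{t} also requires. Truncating the fraction field therefore cannot work; what can be truncated is the residue field and the value group.

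\paragraph{The fallback does not repair this.} The existence of a factorization of $E\to V'$ through \emph{some} complete intersection algebra is not a system of polynomial equations in fixed unknowns. You also do not say which finite pieces are solvable by the finitely generated case.

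\paragraph{What the paper does instead.}
\begin{enumerate}
\item It first makes $\Gamma'/\Gamma$ torsion free, by a transfinite induction using Proposition \ref{pr}.
\item For each pair $(k_1,\Gamma_1)$ with $k_1/k$ and $\Gamma_1/\Gamma$ finitely generated, it builds a subring $V_{k_1,\Gamma_1}\subset V'$ with exactly that residue field and value group. The generators are lifts of a basis of $\Gamma_1/\Gamma$, lifts of a separating transcendence basis, and a primitive element. By the earlier sections $V_{k_1,\Gamma_1}$ is injectively ind-ci over $V$; let $T_{k_1,\Gamma_1}$ be its Henselization.
\item These rings are not nested. Proposition \ref{kes} is used precisely to encode all of them, together with the desired inclusions, as one infinite system of equations. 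Every finite subsystem is solvable in $V'$, so the whole system is solvable in an ultrapower $V'_1$. This yields a filtered union $A_1\subset V'_1$ with residue field $k'$ and value group $\Gamma'$.
\item Iterating gives $\tilde A\subset\tilde V'=\varinjlim V'_n$, an immediate extension, to which Theorem \ref{T2} applies.
\item Only then does one descend from $\tilde V'$ to $V'$, using that finite systems solvable in $\tilde V'$ are solvable in $V'$.
\end{enumerate}

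\paragraph{Missing ingredients.} Your proposal lacks two things: the reduction to an immediate extension handled by Theorem \ref{T2}, and the use of saturation to turn the non-nested $T_{k_1,\Gamma_1}$ into a filtered family.
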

\begin{proof} We apply the  transfinite induction to find a valuation subring $W$ of $V'$, which is ind-ci over $V$ such that for its value group $L$ we have $\Gamma'/L$ torsion free. Set $V_0=V$. For a succesor ordinal  number $\lambda$ choose an element $\gamma$ in $\Gamma'\setminus \Gamma_{\lambda-1}$,  where $\Gamma_{\lambda-1}$ is the value group of $V_{\lambda-1}$ already defined, which induces a torsion element in $\Gamma'/\Gamma$.  As in Proposition \ref{pr} we see that there exist a valuation ring $V_{\lambda}$ such that its value group $\Gamma_{\lambda}$ contains $\gamma$. If $\lambda $ is a limit ordinal number then set  $V_{\lambda}$
for the union of all $V_{\nu}$ for $\nu<\lambda$. The induction stops with some $W=V_{\lambda}$ when there exists no torsion element $\gamma$ in $\Gamma'/\Gamma_{\lambda}$. Clearly, all  $V_{\lambda}$ are injectively ind-ci over $V$.  Changing $V$ by $W$ we may suppose that $\Gamma'/\Gamma$ is torsion free.

Let  $\Gamma_1\subset \Gamma'$  be a subgroup containing $\Gamma$ such that $\Gamma_1/\Gamma$  is finitely generated and $k_1\subset k'$ a  subfield containing $k$ such that $k_1/k$ is  finitely generated. 

 We can find a valuation subring $V_{k_1,\Gamma_1}\subset V'$  containing $V$ with $k_1$ its residue field, $\Gamma_1$ its value group and such that $V_{k_1,\Gamma_1}$
is injectively ind-ci over $V$  (note that a separable finitely generated field  extension is separably generated). Indeed, the proofs from the previous sections use mostly  that $V'$ is Henselian.

 Note that $\Gamma_1/\Gamma$ is free, let us say $\val(x_1)$ for some $x_1 $ in $V'$, is a basis of $\Gamma_1/\Gamma$. Then $\Gamma_1$ is the value group of $V'\cap K(x_1)$. Similarly, we may consider some elements $x_2$ in $V'$, which induces a transcendental basis $\bar x_2$ of $k'/k$ such that $k'/k(\bar x_2)$ is algebraic separable. Note that $x_1,x_2$ form an algebraically independent system of elements over $K$.  Set $W_2=V'\cap K(x_1,x_2)$. Choose an element $z$ in $V'$, which induces a primitive  algebraic separable element of
$k_1/k({\bar x}_2)$. Set $V_{k_1,\Gamma_1}=V'\cap K(x_1,x_2,z)$. Clearly , the residue field of $V_{k_1,\Gamma_1}$ is $k_1$ and its value group is $\Gamma_1$. Note that $V_{k_1,\Gamma_1}$ is essentially generated by $x_1,x_2,z$ and the kernel of the map 
$V[X_1,X_2,Z]\to V' $, $(X_1,X_2,Z)\mapsto (x_1,x_2,z)$ is generated by a monic separable irreducible polynomial $f_{k_1,\Gamma_1}$ in $Z$ over $V[X_1,X_2]$, $X_1,X_2$ being variables.
Any solution of  $f_{k_1,\Gamma_1}$  in $V'$ defines a valuation subring of $V'$ isomorphic with $V_{k_1,\Gamma_1}$.

 Let $\mathcal{E}$ be the set of all pairs $(k_1,\Gamma_1)$
with $k_1\subset k'$, which is  finitely generated extension of $k$ and $\Gamma_1\subset \Gamma'$ a subgroup containing $\Gamma$ such that $\Gamma'/\Gamma_1$ is finitely generated. For some other  $(k_2,\Gamma_2)\in \mathcal{E}$ with $k_1\subset k_2$, $\Gamma_1\subset \Gamma_2$ maybe $V_{k_1,\Gamma_1}\not \subset V_{k_2,\Gamma_2}$. It is also possible that $V_{k_1,\Gamma_1}$ is not contained in the Henselization $T_{k_2,\Gamma_2}$ of $V_{k_2,\Gamma_2}$, which can be assumed contained in $V'$. However, in the valuation ring $T_{k_2,\Gamma_2}$ there exists a valuation ring of type $V_{k_1,\Gamma_1}$ using the Henselian property.
Similar to the case of $V_{k_1,\Gamma_1}$ we see that $T_{k_1,\Gamma_1}$ is generated by some algebraically independent elements $y'$ over $K$ (they are already present in $V_{k_1,\Gamma_1}$) followed by some integral elements $(y''_i)_i$ over $V[y']$ because $T_{k_1,\Gamma_1}$ is defined by some etale neighborhoods of $V_{k_1,\Gamma_1}$, which are essentially finite over 
$V_{k_1,\Gamma_1}$ (see \cite[Theorem 2.5]{S}). Note that the extension $V_{k_1,\Gamma_1}\subset T_{k_1,\Gamma_1}$ is immediate.

Let $G_{k_1,\Gamma_1}$ be a  system (not finite) of polynomials
over $V$ in some variables  $Y',Y''$, which generates the kernel of the map $V[ Y',Y'']\to V'$, $(Y',Y'')\mapsto  (y',y'')$. A solution of $G_{k_1,\Gamma_1}$ in $V'$ defines a Henselian   valuation subring of $V'$
isomorphic with $T_{k_1,\Gamma_1}$.

For some   $(k_1,\Gamma_1),(k_2,\Gamma_2)\in \mathcal{E}$ with $k_1\subset k_2$, $\Gamma_1\subset \Gamma_2$ assume that $T_{k_1,\Gamma_1} \subset T_{k_2,\Gamma_2}$.  Then there exist some polynomials $H_{k_1,k_2,\Gamma_1,\Gamma_2,i}$ over $V$ such that 
 $$u_{k_1,k_2,\Gamma_1,\Gamma_2,i} y_{k_1,\Gamma_1,i}=H_{k_1,k_2,\Gamma_1,\Gamma_2,i}(y_{k_2,\Gamma_2}),$$
  for some units $u_{k_1,k_2,\Gamma_1,\Gamma_2,i} $ of $T_{k_2,\Gamma_2}$. Let $F_{k_1,k_2,\Gamma_1,\Gamma_2,i}$ be the system of polynomials 
 $$U_{k_1,k_2,\Gamma_1,\Gamma_2,i} Y_{k_1,\Gamma_1,i}-H_{k_1,k_2,\Gamma_1,\Gamma_2,i}(Y_{k_2,\Gamma_2}),$$ 
  and $U_{k_1,k_2,\Gamma_1,\Gamma_2,i}U'_{k_1,k_2,\Gamma_1,\Gamma_2,i}-1$,
  in some variables $ Y_{k_1,\Gamma_1,i}$, $ Y_{k_2,\Gamma_2,j}$, $U_{k_1,k_2,\Gamma_1,\Gamma_2,i}$,\\
  $U'_{k_1,k_2,\Gamma_1,\Gamma_2,i}$.
  
  A solution of the (not finite) system of polynomials 
$G_{k_1,\Gamma_1}$, $G_{k_2,\Gamma_2}$, $(F_{k_1,k_2,\Gamma_1,\Gamma_2,i})_i$ in $V'$  defines an extension of Henselian valuation subrings of type  $T_{k_1,\Gamma_1}\subset T_{k_2,\Gamma_2}$. A solution of all $G_{k_1,\Gamma_1}$,  $(F_{k_1,k_2,\Gamma_1,\Gamma_2,i})_i$ in $V'$  defines a filtered set by inclusion of  Henselian valuation subrings corresponding to $(k_1,\Gamma_1)\in \mathcal{E}$.

  We apply Proposition \ref{kes}. There exists an ultrafilter $\mathcal{P}_1$ on a set 
  $P_1$ with $\card(P_1)$ greater  than the cardinal of the set of all polynomials $G $ and $F$
  with the property  that $G $, $F$   have  a solution in the ultraproduct $V'_1=\Pi_{\mathcal{P}_1}  V'$ because each finite subsystem of them has a solution in $V'_1$. Indeed, for some $(k_1,\Gamma_1),\ldots,(k_r,\Gamma_r)\in \mathcal{E}$ 
 we choose a pair $(k'',\Gamma'')\in \mathcal{E}$ such that $(k_j,\Gamma_j)<(k'',\Gamma'')$, the order being given by inclusion.
   Then we may find  as before  $T_{k_j,\Gamma_j}$ as some valuation  subrings of   $T_{k'',\Gamma''}$. Thus any finite subsystem of $F,G$ defined by $(T_{k_j,\Gamma_j})_j$  has a solution in  $T_{k'',\Gamma''}$ and so in $V'\subset V'_1 $.
  
  Fix such a solution of $G,F$ in $V'_1$. Thus there exists a filtered set by inclusion of Henselian valuation subrings of $V'_1$, which are injectively ind-ci over $V$  and whose union $A_1$ is a valuation ring with residue field $k'$ and value group $\Gamma'$.
  
   Repeating this procedure with $V'_1$ instead $V$ (note that $V'_1$ is still Henselian) we find a set $P_2$ and an ultrafilter $\mathcal{P}_2$ such that $V'_2=\Pi_{\mathcal{P}_2} V'_1$ contains  a valuation ring $A_2$  with the  residue field and value group   of $V'_1$, which is   injectively ind-ci over  $V'_1$. Repeating again this procedure we find
some sets $(P_n)_n$ and some   ultrafilters $(\mathcal{P}_n)_n$ on them and define $V'_{n+1}=\Pi_{\mathcal{P}_{n+1}} V'_n$ and  $\wt V'=\varinjlim_{n \geq 0} V'_n$. In this way we obtain a filtered set  ordered  by inclusion $A_{n+1}\subset V'_{n+1}$ of   valuation rings      with the  residue field  and value group  of $V'_n$, which is injectively ind-ci over  $V'_n$.
 So the union $\wt A$ of $A_n$ is  injectively ind-ci over $V$, has   the same residue field and the same value group with $\wt V'$.

 Note that the extension $\wt A\subset  \wt{V'}$ is immediate. 
  By Theorem \ref{T2} $\wt{V'}$ is  ind-ci over  $\wt A$  and so  ind-ci over $V$. 
Let $E$ be  a finitely presented $V$-algebra and $w:E\to V'$ a $V$-morphism. Then the composite map $E\to V\to \wt{V'}$ factors through a complete intersection $V$-algebra  $D$.  Thus  $w$ 
 factors through $D$ too    because all finite systems of polynomial equations, which have a solution in $\wt V'$ must have one in $V'$. This is enough  by \cite[ Lemma 1.5]{S}.
\hfill\ \end{proof}

\end{document}